\documentclass[letterpaper, 10 pt, conference]{ieeeconf}  % Comment this line out
                                                    % if you need a4paper
%\documentclass[a4paper, 10pt, conference]{ieeeconf}      % Use this line for a4
                                                    % paper

\IEEEoverridecommandlockouts 
\overrideIEEEmargins

\usepackage[utf8]{inputenc} % allow utf-8 input
\usepackage[T1]{fontenc}    % use 8-bit T1 fonts
\usepackage{url}            % simple URL typesetting
\usepackage{booktabs}       % professional-quality tables
\usepackage{nicefrac}       % compact symbols for 1/2, etc.
\usepackage{microtype}      % microtypography
\usepackage{multirow}
\usepackage{array}
\usepackage{cite}
\usepackage[noblocks]{authblk}
\usepackage{amssymb,arydshln}
\usepackage[nodayofweek]{datetime}

% List Formatting
%\usepackage[shortlabels]{enumitem}
%\setlist[itemize]{itemindent=0ex,itemsep=-0.5ex,leftmargin=3ex,topsep=5pt}
%\setlist[enumerate]{label={\arabic*)},itemindent=0ex,itemsep=-0.5ex,leftmargin=3ex,topsep=5pt}
\def\cmtinclude{1}
\if\cmtinclude1
\newcommand{\ys}[1]{\noindent{\textcolor{blue}{\{{\bf YS:} #1\}}}}
\else
\newcommand{\ys}[1]{}
\fi

\usepackage{enumitem}
\setlist[itemize]{leftmargin=*}

\usepackage[font=small,labelfont=bf]{caption}
\usepackage{amsmath,amsfonts,nccmath,mathtools,mathrsfs}
\usepackage{tcolorbox}
\usepackage{xcolor}
\usepackage[colorlinks = true,
            linkcolor = blue,
            urlcolor  = blue,
            citecolor = blue,
            anchorcolor = blue]{hyperref}

\usepackage[margin=1in]{geometry}

% \usepackage{fancyhdr}
% \pagestyle{empty}
% \fancyhf{}
% \lhead{\emph{Draft}}
% %\rhead{Dec 2020}

\usepackage{caption,balance} 
\captionsetup[table]{skip=10pt}

\usepackage{amsthm}
\usepackage{graphicx,color}
\usepackage{subcaption,setspace}

\newtheorem{theorem}{Theorem}

\newtheorem{lemma}{Lemma}
\newtheorem{corollary}{Corollary}

\newtheorem{assumption}{Assumption}

\theoremstyle{definition}

\newtheorem{example}{Example}

\theoremstyle{remark}
\newtheorem{remark}{Remark}

\usepackage[nameinlink]{cleveref}
% lowercase names
\crefname{equation}{}{}
\crefname{theorem}{Theorem}{Theorems}
\crefname{corollary}{Corollary}{Corollaries}
\crefname{example}{Example}{Examples}
\crefname{assumption}{Assumption}{Assumptions}
\crefname{lemma}{Lemma}{Lemmas}
\crefname{proposition}{Proposition}{Propositions}
\crefname{figure}{Figure}{Figures}
\crefname{table}{Table}{Tables}
\crefname{section}{Section}{Sections}
\crefname{appendix}{Appendix}{Appendices}
% Uppercase
\Crefname{equation}{}{}
\Crefname{theorem}{Theorem}{Theorems}
\Crefname{corollary}{Corollary}{Corollaries}
\Crefname{example}{Example}{Examples}
\Crefname{lemma}{Lemma}{Lemma}
\Crefname{proposition}{Proposition}{Proposition}
\Crefname{figure}{Figure}{Figures}
\Crefname{table}{Table}{Tables}
\Crefname{section}{Section}{Sections}
\Crefname{appendix}{Appendix}{Appendices}

\newcommand{\tr}{{{\mathsf T}}}
\newcommand{\mK}{{\mathsf{K}}}

\usepackage{algorithm}
\usepackage{algorithmic}
\usepackage{amsmath}
\usepackage{amssymb}
 \usepackage{multirow}
 \usepackage{mathrsfs}
\usepackage{amsfonts}

\newcommand{\cC}{{\mathcal C}}

%\usepackage{cite}\usepackage{amsthm}\usepackage{dsfont}\usepackage{array}\usepackage{mathrsfs}\usepackage{comment}

%\newcommand{\fx}{f\lc x\rc}

%\newcommand{\kx}{\suli k_i \normxi}

%this file
%\newcommand{\twt}{\tilde w_t}

%\newcommand{\xtauo}{x_{\tau+1}}
%\newcommand{\xtau}{x_\tau}
\newcommand{\diag}{{\rm diag}}

%%%%%%%%%%%%%%%%%%%%%%
%\newcommand{\flb}{f_{\lambda^*}}
%\newcommand{\Hlb}{H_{\lambda^*}}
%%%%%%%%%%%%%%%%%%%%%%

%\newcommand{\mf}[1]{\noindent{\textcolor{red}{\{{\bf MF:} #1\}}}}

\DeclareFontFamily{U}{mathx}{\hyphenchar\font45}
\DeclareFontShape{U}{mathx}{m}{n}{
      <5> <6> <7> <8> <9> <10>
      <10.95> <12> <14.4> <17.28> <20.74> <24.88>
      mathx10
      }{}
\DeclareSymbolFont{mathx}{U}{mathx}{m}{n}
\DeclareMathAccent{\widecheck}{\mathalpha}{mathx}{"71}
\newcommand{\R}{{\mathbb{R}}}

\newcommand{\Ib}{I}
\newcommand{\cH}{{\mathcal H}}

\newcommand{\tCK}{\tilde C_K}
\newcommand{\tAK}{\tilde A_K}
\newcommand{\tBK}{\tilde B_K}
\newcommand{\qp}{q^\prime}

\newcommand{\gth}{g_{\text{th}}}
\newcommand{\tth}{t_{\text{th}}}

\newcommand{\K}{\mK}

\newcommand{\lbmin}{\lambda_{\text{Han},\min}}
\newcommand{\lbminK}{\lbmin(\K_t)} 
\newcommand{\redord}{\text{reduce\_order}}

\makeatletter
\newcommand{\removelatexerror}{\let\@latex@error\@gobble}
\makeatother

\allowdisplaybreaks
\usepackage{setspace}
\setstretch{0.98}

\title{\bf Escaping High-order Saddles in Policy Optimization for Linear Quadratic Gaussian (LQG) Control}

\author{Yang Zheng$^{\dag1}$ \quad Yue Sun$^{\dag2}$ \quad Maryam Fazel$^{3}$ \quad Na Li$^{4}$
\thanks{$\dag$ Y. Zheng and Y. Sun  contributed to this work equally. The research of M. Fazel and Y. Sun was supported in part by awards NSF TRIPODS II 2023166, CCF 1839291, and CCF 2007036. The work of N. Li was supported by NSF CAREER  ECCS-1553407, NSF AI institute 2112085, and ONR YIP N00014-19-1-2217.}
%\mf{more comment to say ``equal contribution'' but up to you.}}
%\thanks{$^{1}$ Y. Zheng ({\tt\small zhengy@eng.ucsd.edu}) is with Department of Electrical and Computer Engineering, University of California San Diego, La Jolla, USA.}
\thanks{$^{1}$ Y. Zheng ({zhengy@eng.ucsd.edu}) is with ECE Department, University of California San Diego, La Jolla, USA.}
%\thanks{$^{2}$ Y. Sun ({\tt\small yuesun9308@gmail.com}) worked on this project while he was with Department of Electrical and Computer Engineering, University of Washington, Seattle, USA.}
\thanks{$^{2}$ Y. Sun ({yuesun9308@gmail.com}) worked on this project while he was with ECE Department, University of Washington, Seattle, USA.}
\thanks{$^{3}$ M. Fazel ({mfazel@uw.edu}) is with ECE Department, University of Washington, Seattle, USA.} 
\thanks{$^{4}$ N. Li ({nali@seas.harvard.edu}) is with Department of Electrical Engineering and Applied Mathematics, Harvard University, USA.}
%\thanks{$^{5}$ The research is funded ...}
}

%\date{\today} %{}

\begin{document}
\maketitle
%\mf{should we use ``policy gradient descent" or ``policy optimization" in the title?}

%\mf{for the arxiv version, let's spell out LQG in full in the title} 
\begin{abstract}
    First order policy optimization has been widely used in reinforcement learning. It guarantees to find the optimal policy for the state-feedback linear quadratic regulator (LQR). However, %it has been recently shown that the problem of
    the performance of policy optimization remains unclear for the linear quadratic Gaussian (LQG) control where the LQG cost has spurious suboptimal stationary points. % in the policy space. %, especially high-order saddles where it is unknown whether the perturbed gradient descent (PGD) escapes from it. %All of these bad stationary points correspond to \textit{non-minimal} dynamical controllers.
    In this paper, we introduce a novel perturbed policy gradient (PGD) method to escape a large class of bad stationary points (including high-order saddles). In particular, based on the specific structure of LQG, we introduce a novel reparameterization procedure which converts the iterate from a high-order saddle to a strict saddle, from which standard random perturbations in PGD can escape efficiently. We further characterize the high-order saddles that can be escaped by our algorithm. 
    
    % \ys{I feel $G(s)$ is too technical for abstract, I put it in conclusion}
    % In particular, we first show that all suboptimal stationary points are in the same form after a minimal realization. We then derive an intriguing transfer function $\mathbf{G}(s)$ at any stationary point. We prove that if $\mathbf{G}(s)$ is not an identically zero function, the stationary point after a structural perturbation becomes a strict saddle with probability one. Thus, suitable perturbation~on~gradients can escape this new strict saddle. Our perturbed policy gradient method combines a novel structural perturbation on the controller with a random perturbation on gradients.    
\end{abstract}
\section{Introduction}
%In this paper, we revisit the continuous time linear quadratic Gaussian (LQG) control problem with the perspective of modern machine learning techniques, and analyze the landscape of the cost for policy gradient methods. We propose an algorithm that escapes a type of high order saddles.

%Feedback control theory has a long history. 
In this paper, we revisit the linear quadratic Gaussian (LQG) control, one of the most fundamental problems in control theory, from a modern optimization view. In brief, we focus on a continuous-time linear time-invariant (LTI) system
\begin{equation} \label{eq:LTIdynamics}
\begin{aligned}
    \dot x(t) &= Ax(t) + Bu(t) + w(t),\\
    y(t) &= Cx(t) + % + Du(t) 
     v(t), 
\end{aligned}
\end{equation}
where $x(t) \in \mathbb{R}^{n},u(t)\in \mathbb{R}^{m},y(t) \in \mathbb{R}^{p}$ are the state, control {input}, and measurement (output) vector at time $t$, respectively, and $w(t) $,  $v(t)$ are white Gaussian noises with intensity matrices $W \succeq 0$ and $V \succ 0$, respectively. The goal is to design a controller (i.e., policy) based on partial measurements $y(t)$ to minimize a quadratic cost 
\begin{equation} \label{eq:LQGcost}
J(u) := \lim_{T \rightarrow \infty }\frac{1}{T}\mathbb{E} \left[\int_{t=0}^T \left(x^\tr Q x + u^\tr R u\right)dt\right].
\end{equation}
%
%typically evolves with state $x$, input $u$, output $y$ with the following form, where noise can involve in the dynamics. The state is typically hidden for human observers, and we can observe the output $y(t)$, based on which we design the input $u(t)$. 

A special case is the linear quadratic regulator (LQR)~\cite{kalman1960contributions}, where we have direct access to the state $x$ (i.e., $y(t)=x(t), v(t) = 0, \forall t \in \mathbb{R}$ in \cref{eq:LTIdynamics}). % and we target to minimize the linear quadratic (LQ) cost. 
It is known that the optimal policy for the LQR is in the form of static state feedback $u(t) = Kx(t)$, where $K \in \mathbb{R}^{m \times n}$ is a constant matrix that can be obtained by solving a Riccati equation~\cite{dullerud2013course}. On the other hand, when the state is not directly observed, the policy that minimizes~\cref{eq:LQGcost} is a dynamical controller of the form~\cite{zhou1996robust}
\begin{equation} \label{eq:dynamical-controller}
\begin{aligned}
    \dot \xi(t) &= A_{\mK} \xi(t) + B_{\mK} y(t),\\
          u(t) &= C_{\mK}\xi(t),
\end{aligned}
\end{equation}
where the optimal parameters $\mK^* := (A_{\mK}^*, B_{\mK}^*, C_{\mK}^*)$ can be obtained by solving two Riccati equations \cite{zhou1996robust,bertsekas2012dynamic} (see \Cref{subsection:review}). %Structural properties such as separation principle have been thoroughly studied in classical control \cite{zhou1996robust,bertsekas2012dynamic}.  
Algorithms for solving Riccati equations are well-studied, including iterative algorithms~\cite{hewer1971iterative}, algebraic solution methods~\cite{lancaster1995algebraic}, and semidefinite optimization~\cite{balakrishnan2003semidefinite}. All these methods are model-based and explicitly rely on the system model~\cref{eq:LTIdynamics}. %\ys{Replace the sentence above: All these methods require the knowledge of system matrices (like $A,B,C,W,V$)}. 
Recently, policy gradient methods have achieved impressive results for many challenging problems~\cite{recht2019tour}. These methods directly optimize the quadratic cost~\cref{eq:LQGcost} as a function of the policy class $\mK = (A_{\mK}, B_{\mK}, C_{\mK})$ via gradient descent or its variants. They can be further made model-free, bypassing an explicit estimation of the model~\cref{eq:LTIdynamics}. The flexibility of model-free control has stimulated a growing interest in investigating foundations of policy gradient methods for classical control problems~\cite{fazel2018global, mohammadi2019global,malik2019derivative,sun2021learning,furieri2020learning,tu2019gap,zhang2019policy,umenberger2022globally,duan2022optimization,bu2019lqr,hu2022connectivity,zheng2021analysis}.     
% and only need to estimate the cost via data driven approaches. 

%however, are in sharp contrast with policy gradient methods that are widely applied in modern machine learning. Policy gradient methods

%, where one expresses the LQ cost as a function of the controller, which is also called the policy, and minimizes the cost with respect to the controller via gradient descent or its variants. Unlike the approaches in control theory, the method can be made model free, where we don't have to access the system parameters $A,B,C,D$ and only need to estimate the cost via data driven approaches. 

%\ys{The policy gradient descent follows the update $\K \leftarrow \K - \eta\nabla J(\K)$.} 
While it is guaranteed to obtain the optimal controller for LQR or LQG via classical model-based methods, such optimality guarantee is more difficult when using policy gradient methods since the cost~\cref{eq:LQGcost} is typically nonconvex in the policy space. %Indeed, there exist counterexamples showing that the LQR cost as a function of $K$ is not convex, quasi-convex or star-convex \cite{fazel2018global}. However,
For LQR, recent work has shown that although the LQR cost is nonconvex, it  is gradient dominant and coersive, and has a unique stationary point under very mild conditions, rendering the convergence of policy gradient methods  to the globally optimal controller \cite{fazel2018global, mohammadi2019global,malik2019derivative,sun2021learning}. On the other hand, the LQG cost is neither gradient dominant nor coersive, and there may exist spurious saddle points \cite{zheng2021analysis}, making it challenging for policy gradient to find the optimal controller. 
%\lina{I removed "the domain of stabilizing controllers is not necessarily connected," since the dis-connectivity is not the challenge for LQG.}

Saddle points do not always destroy the performance of policy gradient methods. %When we appropriately add perturbations to gradient steps, or apply stochastic gradient descent, 
Suitable perturbed policy gradient methods are able to escape \textit{strict saddle points} whose Hessian has at least one strictly negative eigenvalue~\cite{ge2015escaping,jin2017escape,carmon2016gradient}. However, it is shown in \cite[Theorem 4.2]{zheng2021analysis} that the Hessian of the LQG cost at a saddle point can even degenerate to zero. We denote the saddle point whose Hessian does not give escaping directions as a \emph{high-order saddle point}. Perturbed policy gradient methods~\cite{ge2015escaping,jin2017escape,carmon2016gradient} may thus get stuck  and take an exponential number of iterations to escape high-order saddle points. 

All the (strict or high-order) saddle points of LQG discussed in \cite{zheng2021analysis} are due to a loss of \textit{controllability} and/or \textit{observability} for the controller $(A_{\mK}, B_{\mK}, C_{\mK})$ in \Cref{eq:dynamical-controller} (i.e., \textit{non-minimal} controllers). Indeed, any stationary point corresponds to a full-order \textit{minimal controller} cannot be saddle and it is instead globally optimal~\cite{zheng2021analysis}. Further, many intrigue landscape properties of LQG are brought by a classical notion of \textit{similarity transformations} that induces a symmetry structure \cite{zheng2021analysis}. In this paper, we raise a natural question of whether this induced symmetry structure allows us to reveal more information about high-order saddles of LQG such that suitable perturbed policy gradient methods can escape those points. We provide a positive answer to this question. 

%The contributions of this paper are as follows. 
In particular, we first show that any stationary point after model reduction remains to be stationary. This gives a classification of the stationary points: all bad (suboptimal or saddle) stationary points after model reduction become lower-order and form new stationary points with the same LQG cost.  %we first present a classification of stationary points of LQG, showing that all bad (suboptimal or saddle) stationary points can be reduced in .  %showing that all bad (suboptimal or saddle) stationary points are in the same form after a minimal realization. % \lina{this sentence is confusing and might be miss leading}. 
We then reveal an intriguing transfer function $\mathbf{G}(s)$ at any stationary point $(A_{\mK}, B_{\mK}, C_{\mK})$: 1) if $(A_{\mK}, B_{\mK}, C_{\mK})$ is globally optimal, the function $\mathbf{G}(s)$ is identically zero, $\forall s \in \mathbb{C}$; 2) if $\mathbf{G}(s)$ is not identically zero, we can perturb $(A_{\mK}, B_{\mK}, C_{\mK})$  to get a new stationary point %$(\hat{A}_{\mK}, \hat{B}_{\mK}, \hat{C}_{\mK})$ 
with the same LQG cost, which is a strict saddle with probability one. Standard perturbed policy gradient (PGD) methods~\cite{ge2015escaping,jin2017escape} can thus escape this new strict saddle. We emphasize that our PGD method include perturbations on two parts: 1) a novel structural perturbation on the stationary point $(A_{\mK}, B_{\mK}, C_{\mK})$; 2) a standard random perturbation on gradients~\cite{jin2017escape}. This combination enables escaping a large class of bad stationary points (including high-order saddles) in LQG.  

%characterize the second-order behavior of LQG and introduce 
The rest of this paper is organized as follows. We %review some preliminaries of LQG and 
present the problem statement in \Cref{s:LQG review}. Our main results on characterizing stationary points and Hessians are presented in \Cref{section:hessians}. \Cref{section:PGD} shows empirical performance of our perturbed policy gradient method. We conclude the paper in \Cref{section:conclusion}. Technical proofs and auxiliary computations are postponed to the appendix.

\section{Preliminaries and Problem statement}\label{s:LQG review}

%In this section, we first review the Linear Quadratic Gaussian (LQG) control and present the problem statement of this paper.  

\subsection{Review of LQG control} \label{subsection:review}

The classical LQG control problem is defined as 
\begin{equation} \label{eq:LQG}
    \begin{aligned}
        \min_{u(t)} \quad & J(u) \\
        \text{subject to} \quad & ~\cref{eq:LTIdynamics},
    \end{aligned}
\end{equation}
    where $J(u)$ is defined in~\cref{eq:LQGcost}  with $Q \succeq 0$ and $R\succ 0$. In~\cref{eq:LQG}, the input $u(t)$ depends on all past observation $y(\tau)$ with $\tau < t$. We make the following standard assumption.
\begin{assumption} \label{assumption:stabilizability}
$(A,B)$ and $(A,W^{1/2})$ are controllable, and $(C,A)$ and $(Q^{1/2},A)$ are observable.
% The dynamics of the plant \cref{eq:Dynamic} are controllable and observable. %In addition, $Q$ and $R$ are positive definite.
\end{assumption}

% We study the continuous time linear time-invariant dynamical system. The state, input, and output are denoted as $x(t)\in\R^n$, $u(t)\in\R^m$, $y(t)\in\R^p$. Let $A\in\R^{n \times n}$, $B\in\R^{n \times m}$, $C\in\R^{p \times n}$ denote the system parameters. The system has state and output noises, denoted as $w(t)\in\R^n$ and $v(t)\in\R^p$. We assume they are Gaussian noises, with $w(t)\sim\Nn(0,W)$, $v(t)\sim\Nn(0,V)$, $W\succeq 0$ and $V\succ 0$.
% The LTI dynamical system evolves with the following form.
% \begin{align*}
%     \dot x(t) = Ax(t) + Bu(t) + w(t),\ y(t) = Cx(t) + v(t).
% \end{align*}
% One can observe the output $y(t)$ and design the input $u(t)$ based on $y(0)$ through $y(t)$, while the state $x$ is always hidden. 
% Define constant matrices $Q\in\Spsd^n$, and $R\in\Spd^m$. The linear quadratic cost is defined as
% \begin{align*}
%     J(u) := \lim_{T\rightarrow +\infty} \frac{1}{T}\Eb\left[\int_{t=0}^T \left(x^\tr Qx + u^\tr Ru\right)\dtd t\right],
% \end{align*}
% where the expectation is taken over randomness of $w,v$. 

The optimal solution to~\cref{eq:LQG} is a dynamical controller in the form of~\cref{eq:dynamical-controller},  % , i.e., the controller that minimizes the cost $J(u)$ is a dynamical controller. 
in which $\xi(t)\in\R^q$ is the controller internal state, and $A_{\mK}\in\R^{q\times q}$, $B_{\mK}\in\R^{q\times p}$, $C_{\mK}\in\R^{m\times q}$ specify the dynamics of the controller. 
%
% the controller runs with the following form. 
% \begin{align*}
%     \dot \xi(t) = A_{\mK} \xi(t) + B_{\mK} y(t),\ u(t) = C_{\mK}\xi(t).
% \end{align*}
%
%We denote the set of stabilizing controllers with $\xi\in\R^q$ as $\cC_q$.
%\ys{$\xi$ is $\R^q$ or $\R^n$}
While $q$ can be any positive number, one does not have to use $q>n$ and the optimal controller has $q=n$, given by algebraic Riccati equations (AREs) \cite[Thm. 14.7]{zhou1996robust}. Precisely, %let $L\in \R^{n\times p}$, $M\in\R^{m\times n}$ and $P,S\in\Spsd^n$. 
let $P,S$ be the unique positive semidefinite solutions to the following AREs
\begin{equation} \label{eq:ARE}
\begin{aligned}
    AP + PA^\tr -PC^\tr V^{-1}CP + W &= 0,\\
    A^\tr S + SA - SBR^{-1}B^\tr S + Q &= 0.
\end{aligned}
\end{equation}
Then, the parameters of an optimal controller to~\cref{eq:LQG} are %\ys{is there a reason for choosing star and hat? Do you mean optimal $A$ is unique whereas $B,C$ are not? But you can still do similar transform on $A$ right?}
\begin{align} \label{eq:optimal-ARE}
    A_{\mK}^\star = A - BM - LC,\, {B}_{\mK}^\star = L,\, {C}_{\mK}^\star = -M
\end{align}
where $L = PC^\tr V^{-1}$, $M = R^{-1}B^\tr S$. % and $P, S$ are the solutions to~\cref{eq:ARE}.  
%while $K,L$ solve the AREs. 
The optimal solution $(A_{\mK}^\star, {B}_{\mK}^\star, {C}_{\mK}^\star) $ is not unique in the state-space domain. Any similarity transformation leads to another equivalent optimal controller (they correspond to the same transfer function in the frequency domain).

% \begin{remark}
% Another approach falls under optimal $\cH_2$ control. One formulates an equivalent convex optimization problem (a semidefinite program) with a change of variable technique, and the convex function is easy to minimize. We omit the detailed steps of the method and one can find it in \cite[Sec. 4.2.1, 4.2.5]{scherer2000linear}. This approach solves many problems including $\cH_\infty$ control, mixed objective control, robust $\cH_2/\cH_\infty$ control.  \hfill $\square$
% \end{remark}

\subsection{Problem Statement}

In this paper, we embrace the spirit of~\cite{fazel2018global,zheng2021analysis,mohammadi2019global,sun2021learning,malik2019derivative} and view the LQG problem~\cref{eq:LQG} from a modern optimization perspective. We consider the policy class $(A_\mK, B_\mK, C_\mK)$ in~\cref{eq:dynamical-controller}, and the closed-loop matrix becomes
\begin{equation}
\label{eq:closedloopmatrix}
A_{\mathrm{cl}}:=\begin{bmatrix}
    A & BC_{\mK} \\
    B_{\mK}C & A_{\mK}
\end{bmatrix} \in \mathbb{R}^{(n + q) \times (n + q)}.
\end{equation}
The set of internally stabilizing policies~\cite[Chapter 13]{zhou1996robust}~is   
\begin{equation*} %\label{eq:internallystabilizing}
    \mathcal{C}_{q} \!:=\! \left\{
    \left. \mK\!=\!\begin{bmatrix}
    0 & C_{\mK} \\
    B_{\mK} & A_{\mK}
    \end{bmatrix}
    \in \mathbb{R}^{(m+q) \times (p+q)} \right|\cref{eq:closedloopmatrix}~\text{is stable}\right\}\!.
\end{equation*}
Let $J_q(\mK):\mathcal{C}_q\rightarrow\mathbb{R}$ denote the corresponding LQG cost~\cref{eq:LQGcost} for each stabilizing policy in $\mathcal{C}_q$. It is known~\cite[Lemmas 2.3 \& 2.4]{zheng2021analysis} that this function $J_q(\mK)$ is real analytic on $\mathcal{C}_q$ and admits efficient computation.

\begin{lemma}\label{lemma:LQG_cost_formulation1}
Fix $q\in\mathbb{N}$ such that $\mathcal{C}_q\neq\varnothing$. Given $\mK\in\mathcal{C}_q$, we have
\begin{equation}\label{eq:LQG_cost_formulation1}
\begin{aligned}
J_q(\mK)
&=
\operatorname{tr}
\left(
Q_{\mathrm{cl},\mK} X_\mK\right) =
\operatorname{tr}
\left(
W_{\mathrm{cl},\mK} Y_\mK\right),
\end{aligned}
\end{equation}
where $X_{\mK}$ and $Y_{\mK}$ are the unique positive semidefinite 
solutions to the following Lyapunov equations
\begin{subequations}
\begin{align}
A_{\mathrm{cl}}X_{\mK} + X_{\mK}A_{\mathrm{cl}}^\tr +  W_{\mathrm{cl},\mK}
& = 0, \label{eq:LyapunovLQGX}
\\
A_{\mathrm{cl}}^\tr Y_{\mK} +  Y_{\mK}A_{\mathrm{cl}} +  Q_{\mathrm{cl},\mK}
& = 0, \label{eq:LyapunovLQGY}
\end{align}
\end{subequations}
where $A_{\mathrm{cl}}$ is defined in~\cref{eq:closedloopmatrix} and 
$$
Q_{\mathrm{cl},\mK} := \begin{bmatrix}
Q & 0 \\ 0 & C_{\mK}^\tr R C_{\mK}
\end{bmatrix}, \; W_{\mathrm{cl},\mK} := \begin{bmatrix}
W & 0 \\ 0 & B_{\mK} V B_{\mK}^\tr
\end{bmatrix}. 
$$
\end{lemma}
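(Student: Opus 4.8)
The plan is to convert the stochastic cost~\cref{eq:LQGcost} into a deterministic algebraic quantity by forming the closed-loop dynamics and invoking the stationary second-order statistics of a stable linear system driven by white noise. First I would stack the plant state $x(t)$ and the controller state $\xi(t)$ into an augmented state $z(t):=\begin{bmatrix} x(t)^\tr & \xi(t)^\tr\end{bmatrix}^\tr\in\R^{n+q}$. Substituting $u=C_\mK\xi$ and $y=Cx+v$ into~\cref{eq:LTIdynamics} and~\cref{eq:dynamical-controller} gives
\begin{equation*}
\dot z(t) = A_{\mathrm{cl}}\, z(t) + \begin{bmatrix} w(t) \\ B_\mK v(t)\end{bmatrix},
\end{equation*}
with $A_{\mathrm{cl}}$ as in~\cref{eq:closedloopmatrix}. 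Since $w$ and $v$ are independent white noises of intensities $W$ and $V$, the augmented process noise has intensity exactly $W_{\mathrm{cl},\mK}$. The running cost is the quadratic form $x^\tr Q x + u^\tr R u = z^\tr Q_{\mathrm{cl},\mK} z$, so that $J_q(\mK) = \lim_{T\to\infty}\frac1T\,\me\!\left[\int_0^T z(t)^\tr Q_{\mathrm{cl},\mK} z(t)\,dt\right]$.

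Next I would use that $\mK\in\mathcal{C}_q$ renders $A_{\mathrm{cl}}$ Hurwitz, so the state covariance $\Sigma(t):=\me[z(t)z(t)^\tr]$ converges to a stationary limit $X_\mK:=\lim_{t\to\infty}\Sigma(t)$, which is the unique positive semidefinite solution of~\cref{eq:LyapunovLQGX}. Concretely, $X_\mK=\int_0^\infty e^{A_{\mathrm{cl}}t} W_{\mathrm{cl},\mK} e^{A_{\mathrm{cl}}^\tr t}\,dt$ is well defined and PSD because $A_{\mathrm{cl}}$ is stable, and it is unique because the Lyapunov operator $Z\mapsto A_{\mathrm{cl}}Z+ZA_{\mathrm{cl}}^\tr$ is invertible for Hurwitz $A_{\mathrm{cl}}$. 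Writing $\me[z^\tr Q_{\mathrm{cl},\mK} z]=\operatorname{tr}(Q_{\mathrm{cl},\mK}\Sigma(t))$ and passing the Cesàro limit through the trace yields $J_q(\mK)=\operatorname{tr}(Q_{\mathrm{cl},\mK}X_\mK)$, the first equality.

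The second equality I would establish purely algebraically. Let $Y_\mK$ be the unique PSD solution of the dual equation~\cref{eq:LyapunovLQGY}, again existing by stability of $A_{\mathrm{cl}}$. Substituting $Q_{\mathrm{cl},\mK}=-(A_{\mathrm{cl}}^\tr Y_\mK + Y_\mK A_{\mathrm{cl}})$ from~\cref{eq:LyapunovLQGY}, invoking~\cref{eq:LyapunovLQGX}, and using the cyclic property of the trace together with the symmetry of $X_\mK,Y_\mK$, I obtain
\begin{equation*}
\begin{aligned}
\operatorname{tr}(Q_{\mathrm{cl},\mK} X_\mK)
&= -\operatorname{tr}\!\left((A_{\mathrm{cl}}^\tr Y_\mK + Y_\mK A_{\mathrm{cl}})X_\mK\right)\\
&= -\operatorname{tr}\!\left(Y_\mK(A_{\mathrm{cl}} X_\mK + X_\mK A_{\mathrm{cl}}^\tr)\right) = \operatorname{tr}(W_{\mathrm{cl},\mK} Y_\mK),
\end{aligned}
\end{equation*}
which is the claimed duality.

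The main obstacle is the stochastic step: justifying that the time-averaged expectation of the quadratic form equals the trace against the \emph{stationary} covariance. This rests on showing $\Sigma(t)\to X_\mK$ and that the long-run time average coincides with the stationary value (a mean-ergodic argument), which in turn presumes a clean definition of the white-noise-driven linear model and its second-moment dynamics. The algebraic ingredients (existence, uniqueness, and positive semidefiniteness of $X_\mK, Y_\mK$, and the trace identity) are routine given that $A_{\mathrm{cl}}$ is Hurwitz. In a controls paper I would either invoke the standard stationary-covariance characterization of the $\mathcal{H}_2$ norm or simply appeal to~\cite[Lemmas 2.3 \& 2.4]{zheng2021analysis}, as already noted in the excerpt.
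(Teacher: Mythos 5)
Your proof is correct and is the standard argument: forming the closed-loop system, identifying the stationary covariance as the solution of \cref{eq:LyapunovLQGX}, and obtaining the dual expression via the trace/Lyapunov identity. The paper itself does not prove this lemma but defers to \cite[Lemmas 2.3 \& 2.4]{zheng2021analysis}, whose derivation follows essentially the same route you outline, so there is no substantive difference in approach.
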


\Cref{lemma:LQG_cost_formulation1} works for stabilizing controllers of any order $q$. In this paper, we are mainly interested in characterizing the full-order case $\mathcal{C}_n$. Now, given the state dimension $n$, we can formulate the LQG problem~\cref{eq:LQG} into a constrained optimization problem
\begin{equation} \label{eq:LQG_reformulation}
    \begin{aligned}
        \min_{\mK} \quad &J_n(\mK) \\
        \text{subject to} \quad& \mK \in \mathcal{C}_n. 
    \end{aligned}
\end{equation}

An important notion of dynamical controllers is \textit{minimality}: a controller $(A_\mK, B_\mK, C_\mK)$ is minimal if $(A_\mK, B_\mK)$ is controllable and $(C_\mK, A_\mK)$ is observable.   
As revealed in~\cite{zheng2021analysis}, the optimization landscape of~\cref{eq:LQG_reformulation} is more complicated than that of LQR: 1) the feasible region $\mathcal{C}_n$ can have at most two disconnected components; 2) the cost function $J_n(\mK) $ is not coersive and not gradient dominant, and it can have suboptimal saddle points~\cite[Theorem 4.2]{zheng2021analysis}. Two nice features are 1) all sub-optimal saddle points correspond to \textit{non-minimal controllers} and 2) all stationary points that  correspond to \textit{minimal controllers} in $\mathcal{C}_n$ are globally optimal~\cite[Theorem 4.3]{zheng2021analysis}. 

Naive policy gradient methods can thus get stuck around sub-optimal saddle points. % that correspond to non-minimal controllers. % does not converge to an approximate local minimum.
In this paper, we aim to provide further landscape characterizations of~\cref{eq:LQG_reformulation} and introduce a perturbed policy gradient method to escape bad stationary points of~\cref{eq:LQG_reformulation}. In particular, we first show that any stationary point of the LQG problem~\cref{eq:LQG_reformulation} after model reduction remain to be stationary, and then 
%we provide a classification of all stationary points of the LQG problem~\cref{eq:LQG_reformulation}, and then 
characterize the second-order behavior of $J_n(\mK)$ on a non-minimal stationary point. %\lina{see my comment} 
This motivates the design of our perturbed policy gradient method.

% The LQG cost as a function of policy $A_{\mK},B_{\mK},C_{\mK}$ is not gradient dominant \cite{zheng2021analysis}. The landscape has the following properties: 
% \begin{itemize}
%     \item[-] The set of stabilizing controllers has one or two connected components.
%     \item[-] The cost is not coersive in policy space.
%     \item[-] There can be suboptimal stationary points when the controller is non-minimal, i.e., not controllable or observable. The suboptimal stationary points can be high order saddle points.
%     \item[-] Any stationary point that is a minimal controller is a globally optimal controller satisfying the Riccati equations.
% \end{itemize}
% The non-minimal high order saddle points indicate that the naive first order policy gradient method with no additional design does not converge to an approximate local minimum.

% Any controller with the transfer function
% \begin{align*}
%     \mT^*(s) = C_{\mK}^*(sI-A_{\mK}^*)^{-1}B_{\mK}^*
% \end{align*}
% is an optimal controller for LQG. Note that, the same transfer function can map to multiple policies. When the controller is minimal, i.e., controllable and observable, then the set of policies with the same transfer function $\mT(s) = C_{\mK}(sI-A_{\mK})^{-1}B_{\mK}$ is
% \begin{align*}
%     \left\{(A'_{\mK},
%     B'_{\mK},C'_{\mK})\ |\ A'_{\mK} = TA_{\mK}T^{-1},\ B'_{\mK} = TB_{\mK},\right. \\
%     \left. C'_{\mK} = C_{\mK}T^{-1},\ T\in\R^{n\times n} \text{~is~invertible} \right\}
% \end{align*}

\section{Stationary Points and Their Hessians} \label{section:hessians}

The LQG problem~\cref{eq:LQG} has an inherent symmetry structure induced by the notion of similarity transformation. Let $\mathrm{GL}_q$ denote the set of $q \times q$ invertible matrices.  Given $q\geq 1$ such that $\mathcal{C}_q\neq\varnothing$,  the following map $\mathscr{T}_q:\mathrm{GL}_q\times\mathcal{C}_q\rightarrow\mathcal{C}_q$ represents similarity transformations %\lina{not define notation $\mathrm{GL}_q$ yet.}: 
\begin{equation}\label{eq:def_sim_transform}
\begin{aligned}
\mathscr{T}_q(T,\mK)
\coloneqq &\begin{bmatrix}
I_m & 0 \\
0 & T
\end{bmatrix}\begin{bmatrix}
0 & C_{\mK} \\
B_{\mK} & A_{\mK}
\end{bmatrix}\begin{bmatrix}
I_p & 0 \\
0 & T
\end{bmatrix}^{-1} \\
= &\begin{bmatrix}
0 & C_{\mK}T^{-1} \\
TB_{\mK} & TA_{\mK}T^{-1}
\end{bmatrix}.
\end{aligned}
\end{equation}
It is well-known that similarity transformations do not change the behavior of dynamical controllers and thus the LQG cost~\cref{eq:LQGcost} is invariant with respect to $\mathscr{T}_q(T,\mK)$, i.e., we have $J_q(\mK) = J_q\!\left(\mathscr{T}_q(T,\mK)\right), \forall \mK \in \mathcal{C}_q, T \in \mathrm{GL}_q$.

\subsection{Classification of stationary points}

The symmetry via similarity transformations brings rich and complicated landscape properties of~\cref{eq:LQG}. Here, we show that the underlying symmetry also allows a classification of stationary points of LQG~\cref{eq:LQG}. 
The lemma below gives an explicit relationship among the gradients of $J_q(\mK)$ at $\mK$ and $\mathscr{T}_q
\left(T,\mK
\right)$. 

\begin{lemma}[{\!\cite[Lemma 4.3]{zheng2021analysis}}] \label{lemma:gradient_simi_tran_linear}
Let $\mK=\begin{bmatrix}
0 & C_{\mK} \\ B_{\mK} & A_{\mK}
\end{bmatrix}\in \mathcal{C}_q$.
For any $T\in\mathrm{GL}_q$, we have
\begin{equation} \label{eq:Gradient_sim_transformation}
\left.\nabla J_q\right|_{\mathscr{T}_q \left(T,\mK \right)}
=\begin{bmatrix}
I_m & 0 \\
0 & T^{-\tr}
\end{bmatrix}
\cdot \left.\nabla J_q\right|_{\mK}
\cdot \begin{bmatrix}
I_p & 0 \\
0 & T^{\tr}
\end{bmatrix}.
\end{equation}
\end{lemma}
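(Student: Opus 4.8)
The plan is to leverage the invariance property stated just above, namely $J_q(\mathscr{T}_q(T,\mK)) = J_q(\mK)$ for all $T \in \mathrm{GL}_q$, combined with the observation that for a \emph{fixed} $T$ the similarity transformation is a \emph{linear} map in $\mK$. Indeed, writing $E_T := \begin{bmatrix} I_m & 0 \\ 0 & T \end{bmatrix}$ and $F_T := \begin{bmatrix} I_p & 0 \\ 0 & T \end{bmatrix}$, equation~\cref{eq:def_sim_transform} reads $\mathscr{T}_q(T,\mK) = E_T\,\mK\,F_T^{-1}$, so the map $L_T : \mK \mapsto E_T \mK F_T^{-1}$ is linear, invertible, and sends $\mathcal{C}_q$ onto itself while preserving the structural zero in the top-left block. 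The invariance then says $J_q \circ L_T = J_q$ as functions on $\mathcal{C}_q$; since $J_q$ is differentiable (indeed real analytic) on $\mathcal{C}_q$, both sides may be differentiated.

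Next I would apply the chain rule under the trace inner product $\langle X, Y\rangle = \operatorname{tr}(X^\tr Y)$. Because $L_T$ is linear, its derivative is $L_T$ itself, so differentiating $J_q(L_T \mK) = J_q(\mK)$ gives, for every admissible direction $E$, the identity $\langle \nabla J_q|_{L_T \mK},\, L_T E\rangle = \langle \nabla J_q|_{\mK},\, E\rangle$. Introducing the adjoint $L_T^*$ of $L_T$, the left side equals $\langle L_T^*(\nabla J_q|_{L_T\mK}),\, E\rangle$, and since $E$ is arbitrary we obtain $L_T^*(\nabla J_q|_{L_T\mK}) = \nabla J_q|_{\mK}$. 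A one-line computation using the cyclic property of the trace shows $L_T^*(G) = E_T^\tr G F_T^{-\tr}$, so the identity becomes $E_T^\tr (\nabla J_q|_{L_T\mK}) F_T^{-\tr} = \nabla J_q|_{\mK}$. Left-multiplying by $E_T^{-\tr}$ and right-multiplying by $F_T^{\tr}$ isolates $\nabla J_q|_{\mathscr{T}_q(T,\mK)} = E_T^{-\tr}(\nabla J_q|_{\mK})F_T^{\tr}$, and evaluating the block transposes/inverses $E_T^{-\tr} = \begin{bmatrix} I_m & 0 \\ 0 & T^{-\tr}\end{bmatrix}$ and $F_T^{\tr} = \begin{bmatrix} I_p & 0 \\ 0 & T^{\tr}\end{bmatrix}$ yields exactly~\cref{eq:Gradient_sim_transformation}.

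The only place that requires genuine care — and thus the main (minor) obstacle — is the constrained nature of the gradient: $\mK$ ranges over the affine subspace $\mathcal{C}_q$ whose top-left block is pinned to $0$, so $\nabla J_q$ is a \emph{restricted} gradient living in the subspace $\mathcal{V}$ of matrices with vanishing $(1,1)$ block, and the admissible directions $E$ lie in $\mathcal{V}$ as well. I would verify that $L_T$ maps $\mathcal{V}$ into $\mathcal{V}$ (immediate, since the $(1,1)$ block of $E_T \mK F_T^{-1}$ is $I_m \cdot 0 \cdot I_p = 0$) and that $L_T^*(G) = E_T^\tr G F_T^{-\tr}$ likewise preserves $\mathcal{V}$; this guarantees that the full-space adjoint coincides with the intrinsic adjoint on $\mathcal{V}$, so no projection corrections arise and the chain-rule step is legitimate on the constrained domain. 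With this bookkeeping checked, the remaining manipulations are routine block-matrix algebra, and one may read the result as the statement that the gradient map is \emph{equivariant} under the group action $\mathscr{T}_q(T,\cdot)$.
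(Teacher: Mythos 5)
Your proof is correct. There is, however, nothing in this paper to compare it against: the lemma is imported verbatim from \cite[Lemma 4.3]{zheng2021analysis} and stated without proof. In that cited source the identity is obtained essentially by direct computation — explicit expressions for the block gradients with respect to $A_\mK$, $B_\mK$, $C_\mK$ in terms of the Lyapunov solutions $X_\mK$, $Y_\mK$, combined with the observation that under $\mK\mapsto\mathscr{T}_q(T,\mK)$ these solutions transform congruently, from which \cref{eq:Gradient_sim_transformation} is read off block by block. Your route is genuinely different and more conceptual: you use only (i) the invariance $J_q\circ L_T=J_q$ stated just above the lemma, (ii) linearity of $L_T$ for fixed $T$, and (iii) the trace-adjoint computation $L_T^*(G)=E_T^\tr G F_T^{-\tr}$, i.e., you prove the general fact that the gradient of a differentiable function invariant under a linear group action is equivariant under that action. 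You also dispose of the one genuine subtlety correctly: since $\nabla J_q$ is the gradient intrinsic to the subspace $\mathcal{V}_q$ of matrices with vanishing $(1,1)$ block, the duality step only determines $L_T^*\bigl(\nabla J_q|_{L_T\mK}\bigr)$ through its pairings against directions in $\mathcal{V}_q$; your verification that both $L_T$ and $L_T^*$ preserve $\mathcal{V}_q$ is exactly what is needed to conclude equality in $\mathcal{V}_q$ without projection corrections. As for what each approach buys: the computational proof in \cite{zheng2021analysis} simultaneously produces the explicit gradient formulas that are needed elsewhere (e.g., in the Hessian analysis underlying \Cref{theorem:non_globally_optimal_stationary_point}), whereas your argument is shorter, avoids all Lyapunov bookkeeping, rests only on the cost invariance (itself an elementary consequence of similar controllers producing the same closed-loop cost, so there is no circularity), and generalizes immediately to any smooth invariant function under any linear group action.
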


As expected, a direct consequence of \Cref{lemma:gradient_simi_tran_linear} is that a stationary point $\mK$ of $J_q$ remains to be stationary over $\mathcal{C}_q$ after any similarity transformation. We can further derive a classification of the stationary points of $J_n$ over the set of full-order controllers $\mathcal{C}_n$. %, we have the following classification. 

\begin{theorem} \label{theorem:all-stationary-points}
Let  $\mK=\begin{bmatrix}
0 & C_{\mK} \\ B_{\mK} & A_{\mK}
\end{bmatrix}\in \mathcal{C}_n$ be  a stationary point of LQG~\cref{eq:LQG}, and let $\hat{\mK}=\begin{bmatrix}
0 & \hat{C}_{\mK} \\ \hat{B}_{\mK} & \hat{A}_{\mK}
\end{bmatrix}\in \mathcal{C}_q$ be  a minimal realization of $\mK$, where $q\leq n$ is the order of its minimal realization. Then, the following dynamical controller with any stable matrix $\Lambda \in \mathbb{R}^{(n-q) \times (n-q)}$ 
\begin{equation} \label{eq:stationary_nonglobally_K-augmented}
        \tilde{\mK}=\left[\begin{array}{c:cc}
    0 & \hat{C}_{\mK} &  0 \\[2pt]
    \hdashline
    \hat{B}_{\mK} & \hat{A}_{\mK} & 0 \\[-2pt]
    0 & 0 & \Lambda
    \end{array}\right] \in \mathcal{C}_{n}
\end{equation}
is a stationary point of~\cref{eq:LQG_reformulation}. If $q = n$ (i.e. $\mK$ itself is minimal), then $\mK$ is globally optimal. 
\end{theorem}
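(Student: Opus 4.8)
The plan is to verify stationarity of $\tilde{\mK}$ directly through the Lyapunov-based gradient of $J_n$, reduce the condition to stationarity of the minimal realization $\hat{\mK}$ in the lower-dimensional space $\mathcal{C}_q$, and then invoke the cited optimality result for the $q=n$ case. First I would record the gradient of $J_n$ at a controller $\mK$ in terms of the blocks of the Lyapunov solutions $X_\mK,Y_\mK$ of \Cref{lemma:LQG_cost_formulation1}. Partitioning $X_\mK=\begin{bmatrix} X_{11} & X_{12}\\ X_{12}^\tr & X_{22}\end{bmatrix}$ and $Y_\mK$ analogously into plant/controller blocks, the standard adjoint-Lyapunov computation yields $\nabla_{A_\mK}J_n = 2(Y_{22}X_{22}+Y_{12}^\tr X_{12})$, together with companion expressions for $\nabla_{B_\mK}J_n$ and $\nabla_{C_\mK}J_n$ that are linear in the same blocks (plus the terms $Y_{22}B_\mK V$ and $RC_\mK X_{22}$).

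Next I would evaluate these at $\tilde{\mK}$. Because the appended block $\Lambda$ is decoupled and feeds neither the measurement nor the cost, the closed-loop matrix is block diagonal, $\tilde A_{\mathrm{cl}}=\mathrm{diag}(\hat A_{\mathrm{cl}},\Lambda)$, and both $W_{\mathrm{cl},\tilde{\mK}}$ and $Q_{\mathrm{cl},\tilde{\mK}}$ carry a zero $\Lambda$-block. Stability of $\Lambda$ and uniqueness of the Lyapunov solutions then force $\tilde X$ and $\tilde Y$ to vanish on every block touching $\xi_\Lambda$, i.e. $\tilde X=\mathrm{diag}(\hat X,0)$ and $\tilde Y=\mathrm{diag}(\hat Y,0)$, where $\hat X,\hat Y$ solve the order-$q$ Lyapunov equations \cref{eq:LyapunovLQGX,eq:LyapunovLQGY} for $\hat{\mK}$. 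Substituting back, every gradient block that involves $\xi_\Lambda$ is zero and the surviving blocks reproduce exactly the gradient of $J_q$ at $\hat{\mK}$; hence $\nabla J_n|_{\tilde{\mK}}=0$ if and only if $\nabla J_q|_{\hat{\mK}}=0$. Stability of $\tilde A_{\mathrm{cl}}$ also certifies $\tilde{\mK}\in\mathcal{C}_n$, since $\hat{\mK}$ inherits internal stability from $\mK$ through the shared transfer function.

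The crux is therefore to prove that $\hat{\mK}$ is stationary over $\mathcal{C}_q$. I would exploit the fact that the LQG cost depends only on the controller's transfer function: writing $J_n=\Phi\circ\pi_n$ and $J_q=\Phi\circ\pi_q$, where $\pi$ sends a realization to its transfer function $G$ and $\Phi$ is the induced closed-loop cost, the chain rule gives $DJ_q|_{\hat{\mK}}=D\Phi|_{G}\circ D\pi_q|_{\hat{\mK}}$ and $DJ_n|_{\mK}=D\Phi|_{G}\circ D\pi_n|_{\mK}$ with a common $G$. Stationarity of $\mK$ means $D\Phi|_{G}$ annihilates $\mathrm{im}\,D\pi_n|_{\mK}$; since any first-order transfer-function perturbation realizable from the minimal $\hat{\mK}$ is also realizable by perturbing the minimal part of $\mK$ (formally $\mathrm{im}\,D\pi_q|_{\hat{\mK}}\subseteq\mathrm{im}\,D\pi_n|_{\mK}$, using \Cref{lemma:gradient_simi_tran_linear} to place $\mK$ in Kalman canonical form first), $D\Phi|_{G}$ also annihilates $\mathrm{im}\,D\pi_q|_{\hat{\mK}}$, so $\nabla J_q|_{\hat{\mK}}=0$.

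I expect this image-containment step—carefully matching the first-order perturbations of a non-minimal realization to those of its minimal realization—to be the main obstacle, whereas the decoupled-$\Lambda$ reduction is routine by comparison. An alternative, more computational route for the same step is to relate $\hat X,\hat Y$ to $X_\mK,Y_\mK$ block-by-block in Kalman form and read the order-$q$ stationarity conditions off the order-$n$ ones; I would fall back on this if the abstract factorization proves delicate to make rigorous. Finally, when $q=n$ the controller $\mK$ is itself minimal and stationary in $\mathcal{C}_n$, so global optimality is immediate from \cite[Theorem 4.3]{zheng2021analysis}.
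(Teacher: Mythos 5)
Your proposal is correct and follows essentially the same route as the paper: place $\mK$ in Kalman canonical form (stationarity is preserved by \Cref{lemma:gradient_simi_tran_linear}), deduce that the minimal part $\hat{\mK}$ is stationary over $\mathcal{C}_q$ because the LQG cost depends only on the controller's transfer function, and then augment $\hat{\mK}$ by a stable $\Lambda$ to recover stationarity in $\mathcal{C}_n$, citing \cite[Theorem 4.3]{zheng2021analysis} for the $q=n$ case. The only differences are presentational: you re-derive the augmentation step via the block-diagonal Lyapunov solutions $\tilde X=\mathrm{diag}(\hat X,0)$, $\tilde Y=\mathrm{diag}(\hat Y,0)$ where the paper simply invokes \cite[Theorem 4.1]{zheng2021analysis}, and your chain-rule factorization through transfer-function space (whose rigor you rightly flag as the delicate point) is made precise exactly by your stated fallback, which is the paper's directional-derivative computation in Kalman form.
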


The fact of $\tilde{\mK}$~\cref{eq:stationary_nonglobally_K-augmented} being stationary of $J_n(\mK)$ seems to be expected, since $\tilde{\mK}$ and $\mK$ correspond to the same transfer function in the frequency domain and $\mK$ is in a higher dimensional space $\mathcal{C}_n$ than $\mathcal{C}_q$. The technical proof is not difficult, which combines the classical Kalman decomposition with \Cref{lemma:gradient_simi_tran_linear} and a result in~\cite[Theorem 4.1]{zheng2021analysis}. We provide the details in the appendix. The second part that $\mK$ is globally optimal if $q = n$ has been proved in~\cite[Theorem 4.3]{zheng2021analysis}.   

\Cref{theorem:all-stationary-points} shows all stationary points that correspond to non-minimal controllers admit a \emph{standard} parameterization as we defined in~\cref{eq:stationary_nonglobally_K-augmented}, which splits the controller state $\xi \in \mathbb{R}^n$ into 1) the controllable/observable (associated with $\hat{A}_{\mK},\hat{B}_{\mK},\hat{C}_{\mK}$ blocks) part and 2) non-controllable/non-observable (associated with $\Lambda$) part.
%
%In \Cref{theorem:all-stationary-points}, we show that all suboptimal (i.e., not global minima) saddle points are non-minimal controllers, where the system orders of the controllers are strictly smaller than $n$. Moreover, they admit a \emph{standard} parameterization as we defined, which splits the controllable-observable (associated with $\hat{A}_{\mK},\hat{B}_{\mK},\hat{C}_{\mK}$ blocks) and non-controllable-observable (associated with $\Lambda$) states.
Furthermore, \Cref{theorem:all-stationary-points} indicates that all \textit{bad} stationary points of~\cref{eq:LQG} after model reduction are in the same form of~\cref{eq:stationary_nonglobally_K-augmented}. %\lina{see my comment} 
Thus, policy gradient methods only need to escape those bad saddle points of the form~\cref{eq:stationary_nonglobally_K-augmented}. This motivates our results in the next section.     

\begin{remark}[Non-minimal globally optimal controllers]
Note that a controller in the form of~\cref{eq:stationary_nonglobally_K-augmented} might still be globally optimal to~\cref{eq:LQG}; See \Cref{example:non-minimal} below. This happens when the solutions $(A_\mK^\star, B_\mK^\star, C^\star_\mK)$~\cref{eq:optimal-ARE} from the Riccati equations~\cref{eq:ARE} are not minimal, i.e. $(A_\mK^\star, B_\mK^\star)$ is not controllable or $(C_\mK^\star, A_\mK^\star)$ is not observable or both. We conjecture that a random LQG instance should have $(A_\mK^\star, B_\mK^\star, C^\star_\mK)$ in~\cref{eq:optimal-ARE} being minimal with probability one. An exact characterization is left for future work. %\ys{last sentence seems overlaping with remark 2} %{\color{red} we should be able to prove this!}  
\hfill $\square$
\end{remark}

\subsection{Hessian of stationary points}\label{ss: nonmin}

Once a policy gradient method reaches a stationary point, if the stationary point corresponds to a minimal controller, it has found a globally optimal solution to~\cref{eq:LQG}. If the stationary point does not correspond to a minimal controller, we can bring it into the form of~\eqref{eq:stationary_nonglobally_K-augmented}, for which we have the following characterization of its hessian.

\begin{theorem} \label{theorem:non_globally_optimal_stationary_point}
%\lina{not sure whether we should differentiate the notations of $A_k^*,B_k^*,C_k^*$ of stationary points and its reduced $A_k^*,B_k^*,C_k^*$, notations should be discussed.}
Consider a stationary point of $J_{n}(\mK)$ over $\mathcal{C}_{n}$ in the form of
% \begin{equation} \label{eq:gradient_nonglobally_K}
%          \tilde{\mK}
%     =\left[\begin{array}{c:cc}
%     0 & \hat{C}_{\mK}^\star &  0 \\[2pt]
%     \hdashline
%     \hat{B}_{\mK}^\star & \hat{A}_{\mK}^\star & 0 \\[-2pt]
%     0 & 0 & \Lambda
%     \end{array}\right] \in \mathcal{C}_{n},
% \end{equation}
\begin{equation} \label{eq:gradient_nonglobally_K}
        \tilde{\mK}
    =\left[\begin{array}{c:cc}
    0 & \hat{C}_{\mK} &  0 \\[2pt]
    \hdashline
    \hat{B}_{\mK} & \hat{A}_{\mK} & 0 \\[-2pt]
    0 & 0 & \Lambda
    \end{array}\right] \in \mathcal{C}_{n},
\end{equation}
with $\hat{A}_{\mK} \in \mathbb{R}^{q \times q}, \hat{B}_{\mK} \in \mathbb{R}^{q \times p}, \hat{C}_{\mK} \in \mathbb{R}^{m \times q}$, stable $\Lambda \in \mathbb{R}^{(n-q) \times (n-q)}$ and $q\leq n$.  
Let $X_{\operatorname{op}} \in \mathbb{S}^{n+q}_{+}$ and $Y_{\operatorname{op}}\in \mathbb{S}^{n+q}_{+}$ be the unique positive semidefinite solutions to the Lyapunov equations~\cref{eq:LyapunovLQGX} and~\cref{eq:LyapunovLQGY} with 
$
\hat{\mK}\!=\!\begin{bmatrix}
    0 & \hat{C}_{\mK} \\
    \hat{B}_{\mK} & \hat{A}_{\mK}
    \end{bmatrix} \in \mathcal{C}_q, 
$ respectively. 
% \begin{subequations}
% \begin{align}
% \begin{bmatrix} A &  B\hat{C}_{\mK}  \\  \hat{B}_{\mK} C & \hat{A}_{\mK}\end{bmatrix}X_{\operatorname{op}} + X_{\operatorname{op}}\begin{bmatrix} A &  B\hat{C}_{\mK}  \\  \hat{B}_{\mK} C & \hat{A}_{\mK}\end{bmatrix}^\tr +  \begin{bmatrix} W & 0 \\ 0 & B_{\mK}^* V(B_{\mK}^*)^\tr   \end{bmatrix}
%  &= 0. \label{eq:LyapunovXop}
% \\
% \begin{bmatrix} A &  B\hat{C}_{\mK}  \\  \hat{B}_{\mK} C & \hat{A}_{\mK}\end{bmatrix}^\tr Y_{\operatorname{op}} +  Y_{\operatorname{op}}\begin{bmatrix} A &  B\hat{C}_{\mK}  \\  \hat{B}_{\mK} C & \hat{A}_{\mK}\end{bmatrix}^\tr +   \begin{bmatrix} Q & 0 \\ 0 & (C_{{\mK}}^*)^\tr R C_{{\mK}}^* \end{bmatrix}
% & = 0. \label{eq:LyapunovYop}
% \end{align}
% \end{subequations}
Define a transfer function of size $p \times m$
% \begin{equation} \label{eq:necessary-condition-G}
%     \mathbf{G}(s) \!:= \!({C}_{\mathrm{al}}X_{\mathrm{op}}  +  V\hat{B}_{\mK}^\tr)(s I - A_{_{\mathrm{cl}}}^\tr )^{-1} (Y_{\mathrm{op}}\hat{B} +  \hat{C}_{\mK}^\tr R ).
% \end{equation}
\begin{equation} \label{eq:necessary-condition-G}
    \mathbf{G}(s) := C_{\mathrm{cl}}(s I - A_{_{\mathrm{cl}}}^\tr )^{-1} B_{\mathrm{cl}}.
\end{equation}
where $ A_{_{\mathrm{cl}}}$ is defined in \cref{eq:closedloopmatrix} with the $\hat{\mK}$ above, and  
$
C_{\mathrm{cl}}: = \bar{C}X_{\mathrm{op}}  +  V\bar{B}_{\mK}^\tr,  \;
B_{\mathrm{cl}}:=Y_{\mathrm{op}}\bar{B} +  \bar{C}_{\mK}^\tr R ,
$
with
\begin{align}
    \bar{C} &\!=\! \begin{bmatrix}
C \; \; 0
\end{bmatrix}\!\in\! \mathbb{R}^{p \times (n+q)}, \bar{B} \!=\! \begin{bmatrix}
B \\ 0
\end{bmatrix}  \!\in\! \mathbb{R}^{(n+q) \times m},  \label{eq:augmented-matrices} \\
\bar{C}_{\mK} &\!=\! \begin{bmatrix}
0 \;\; \hat{C}_{\mK}
\end{bmatrix} \!\in\! \mathbb{R}^{m \times (n+q)},\bar{B}_\mK \!=\! \begin{bmatrix}
0 \\ \hat{B}_{\mK}
\end{bmatrix}  \!\in\! \mathbb{R}^{(n+q) \times p}. \nonumber
\end{align}
%\begin{equation} 
% \begin{align}
%     \hat{C} &\!=\! \begin{bmatrix}
% C \; \; 0
% \end{bmatrix}\!\in\! \mathbb{R}^{p \times (n+q)}, \hat{B} \!=\! \begin{bmatrix}
% B \\ 0
% \end{bmatrix}  \!\in\! \mathbb{R}^{(n+q) \times m},  \label{eq:augmented-matrices} \\
% \hat{C}_{\mK} &\!=\! \begin{bmatrix}
% 0 \;\; {C}_{\mK}^\star
% \end{bmatrix} \!\in\! \mathbb{R}^{m \times (n+q)},\hat{B}_\mK \!=\! \begin{bmatrix}
% 0 \\ \hat{B}_{\mK}
% \end{bmatrix}  \!\in\! \mathbb{R}^{(n+q) \times p}. \nonumber
% \end{align}
%\end{equation}
The following statements hold.
%\vspace{-3mm}
\begin{enumerate}
    \item If $\tilde{\mK}$ in~\cref{eq:gradient_nonglobally_K} is globally optimal in $\mathcal{C}_{n}$, then the function $ \mathbf{G}(s)$ in~\cref{eq:necessary-condition-G} is identically zero $\forall s \in \mathbb{C}$.
    \item If $\mathbf{G}(s)$ in~\cref{eq:necessary-condition-G} is not a zero function, then $\tilde{\mK}$ is a strict saddle point (the Hessian of $J_n(\mK)$ at $\tilde{\mK}$ is indefinite) with probability one  when randomly choosing a stable and symmetric $\Lambda \in \mathbb{S}^{n-q}$.
    \item Let $\mathcal{Z}$ be the set of zeros of $\mathbf{G}(s)$, i.e.,$\mathcal{Z}
=\left\{
s\in\mathbb{C}\mid
\mathbf{G}(s) =0
\right\}.$ Given a stable and symmetric $\Lambda \in \mathbb{S}^{n-q}$, let $\operatorname{eig}(-\Lambda)$ denote the set of (distinct) eigenvalues of $-\Lambda$. If $\operatorname{eig}(-\Lambda) \nsubseteq \mathcal{Z}$, then the Hessian of $J_n(\mK)$ at $ \tilde{\mK}$ is indefinite. %\lina{we may try flip the order of 2) and 3)?} %$ \tilde{\mK}$ is a strict saddle point with probability one. 
\end{enumerate}
\end{theorem}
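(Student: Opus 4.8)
The plan is to compute the Hessian of $J_n$ at $\tilde{\mK}$ restricted to the directions that couple the decoupled block $\Lambda$ to the minimal part $\hat{\mK}$, and to show that this restricted Hessian is a bilinear (hence sign-indefinite) form whose coefficients are $\mathbf{G}$ evaluated at the eigenvalues of $-\Lambda$. Since $\Lambda\in\mathbb{S}^{n-q}$ is symmetric and stable, I would first diagonalize it as $\Lambda=U\operatorname{diag}(-\mu_1,\dots,-\mu_{n-q})U^\tr$ with each $\mu_j>0$, so that (after the corresponding similarity transformation, which by \Cref{lemma:gradient_simi_tran_linear} preserves both stationarity and the cost) each extra mode is scalar with pole $-\mu_j$ and the modes are mutually orthogonal.

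Next I would observe that the decoupled block contributes nothing to the closed-loop transfer function nor to $Q_{\mathrm{cl},\mK}$ and $W_{\mathrm{cl},\mK}$: a perturbation that only makes mode $j$ controllable (perturbing the $j$-th row of $B_\mK$, say $\delta B_\mK=e_j b^\tr$) leaves $J_n$ unchanged to all orders, since the mode is never read into $u$; the dual argument applies to a purely observable perturbation $\delta C_\mK=c\,e_j^\tr$. Consequently, for $\delta\mK=E_{\mathrm{c}}+E_{\mathrm{o}}$ the only surviving second-order term of $J_n(\tilde{\mK}+t\,\delta\mK)$ is the cross term, so the restricted Hessian takes the off-diagonal block form $\left[\begin{smallmatrix} 0 & \mathscr{B}\\ \mathscr{B}^\tr & 0\end{smallmatrix}\right]$ and is indefinite as soon as the bilinear form $\mathscr{B}$ is nonzero. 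The heart of the argument is to identify $\mathscr{B}$: perturbing the Lyapunov solutions $X_\mK,Y_\mK$ and integrating the mode's impulse response $e^{-\mu_j t}$ against the closed-loop state/adjoint responses produces a Laplace transform at $s=\mu_j$, i.e.\ the resolvent $(\mu_j I-A_{\mathrm{cl}}^\tr)^{-1}$, while the boundary factors assemble exactly into $C_{\mathrm{cl}}$ and $B_{\mathrm{cl}}$; hence the rank-one coupling $(\delta B_\mK,\delta C_\mK)=(e_j b^\tr,\,c\,e_j^\tr)$ contributes a term proportional to $b^\tr\mathbf{G}(\mu_j)\,c$.

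With this formula the three statements follow quickly. For (3), if $\operatorname{eig}(-\Lambda)\nsubseteq\mathcal{Z}$ then some $\mu_j$ has $\mathbf{G}(\mu_j)\neq 0$; choosing $b,c$ with $b^\tr\mathbf{G}(\mu_j)\,c\neq 0$ makes $\mathscr{B}\neq 0$, so the Hessian is indefinite. For (2), $\mathbf{G}$ is a strictly proper rational matrix in $s$, so if $\mathbf{G}\not\equiv 0$ then $\mathcal{Z}$ is finite (it lies in the finite zero set of any nonzero entry); a randomly drawn stable symmetric $\Lambda$ has eigenvalues avoiding this finite set with probability one, whence $\operatorname{eig}(-\Lambda)\nsubseteq\mathcal{Z}$ almost surely and (3) applies. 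For (1), if $\tilde{\mK}$ is globally optimal then every controller built from the same $\hat{\mK}$ with an arbitrary stable symmetric $\Lambda$ shares its transfer function and hence its cost, so each is also globally optimal, a local minimum with positive-semidefinite Hessian; the contrapositive of (3) forces $\operatorname{eig}(-\Lambda)\subseteq\mathcal{Z}$ for every such $\Lambda$, and taking $\Lambda=-\mu I$ for all $\mu>0$ shows $\mathcal{Z}\supseteq(0,\infty)$, so the strictly proper rational $\mathbf{G}$ must vanish identically.

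The main obstacle is the middle step: the second-order perturbation analysis of the coupled Lyapunov equations and the bookkeeping that both isolates the cross term and recognizes the resolvent $(\mu_j I-A_{\mathrm{cl}}^\tr)^{-1}$ together with the factors $C_{\mathrm{cl}},B_{\mathrm{cl}}$, so that $\mathscr{B}$ comes out exactly as $b^\tr\mathbf{G}(\mu_j)\,c$ (in particular getting the evaluation point $s=\mu_j$ and the transpose/sign conventions right via the residue at the pole $-\mu_j$). The additional coupling directions in the off-diagonal blocks of $A_\mK$ furnish further escaping directions but are not needed, since the rank-one $B_\mK,C_\mK$ perturbations already realize a nonzero $b^\tr\mathbf{G}(\mu_j)\,c$; reducing to symmetric, diagonal $\Lambda$ is precisely what keeps the modes scalar and orthogonal and makes this accounting tractable.
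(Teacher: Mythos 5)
Your proposal is correct and follows essentially the same route as the paper's proof: the same splitting into $B$-coupling and $C$-coupling rank-one perturbations of the non-minimal modes, whose transfer-function invariance forces the diagonal Hessian entries to vanish; the same identification of the surviving cross term as $\mathbf{G}$ evaluated at an eigenvalue of $-\Lambda$ via the Laplace-transform identity $\int_0^\infty e^{-\mu t}e^{A_{\mathrm{cl}}^\tr t}\,\dtd t=(\mu I-A_{\mathrm{cl}}^\tr)^{-1}$; and the same derivation of statements (1) and (2) from statement (3). Your explicit observation in (1) that swapping $\Lambda$ preserves the transfer function and hence the globally optimal cost (so every such variant must have a positive semidefinite Hessian, forcing $\mathcal{Z}\supseteq(0,\infty)$) spells out a step the paper's contradiction argument leaves implicit, but it is the same argument.
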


\begin{proof}
Statements 1) and 2) are direct consequences of Statement 3). % Detailed proof is provided in Appendix * in [?]. %\lina{Detailed proof is provided in Appedix *** of our online report \cite{}.} 
We give simple arguments below. %The proof of the third statement is given in the appendix. 

$3) \Rightarrow 1)$: If $\tilde{\mK}$ in~\cref{eq:gradient_nonglobally_K} is globally optimal in $\mathcal{C}_{n}$, then the Hessian of  $J_n(\mK)$ at $\tilde{\mK}$ must be positive semidefinite. If $\mathbf{G}(s)$ is not identically zero, then its zero set $\mathcal{Z}$ is a set of finite points due to the fundamental theorem of algebra\footnote{Every non-zero, single-variable, degree $n$ polynomial with complex coefficients has, counted with multiplicity, exactly $n$ complex roots.}. Then, there exists a symmetric $\Lambda \in \mathbb{S}^{n-q}$ such that $\operatorname{eig}(-\Lambda) \nsubseteq \mathcal{Z}$, and thus its Hessian at $\tilde{\mK}$ is indefinite. This is contradicted with $\tilde{\mK}$ being globally optimal.

$3) \Rightarrow 2)$: If $ G(s)$ is not an identically zero function, then its zero set $\mathcal{Z}$ is a set of finite points. When choosing a stable and symmetric $\Lambda \in \mathbb{S}^{n-q}$ randomly, we have $\operatorname{eig}(-\Lambda) \nsubseteq \mathcal{Z}$ holds with probability one. Thus, $\tilde{\mK}$ is a strict saddle point with probability one.

The proof of Statement 3) exploits the bilinear property of the Hessian and the non-controllable/non-observable property to identify a two-by-two hessian block
\begin{equation*} %\label{eq:indefiniteblock}
    \begin{bmatrix}
    \operatorname{Hess}_{\,\tilde{\mK}}(\Delta^{(1)},\Delta^{(1)}) & \operatorname{Hess}_{\,\tilde{\mK}}(\Delta^{(1)},\Delta^{(2)})\\
    \operatorname{Hess}_{\,\tilde{\mK}}(\Delta^{(1)},\Delta^{(2)}) & \operatorname{Hess}_{\,\tilde{\mK}}(\Delta^{(2)},\Delta^{(2)})
    \end{bmatrix} \in \mathbb{S}^2
\end{equation*}
in which the diagonal entries are always zero. Using the Hessian calculation in~\cite[Lemma 4.3]{zheng2021analysis}, we then prove that if $\operatorname{eig}(-\Lambda) \nsubseteq \mathcal{Z}$, then the off-diagonal entries are non-zero. The Hessian of $J_n(\cdot)$ %\ys{maybe use $J_n(\tilde{\mK})$ or $J_n(\cdot)$ in order not to confuse $\K$ and $\tilde{\mK}$} 
at $\tilde{\mK}$ is thus indefinite. Details are presented in the appendix. 
%\hfill $\square$
\end{proof}

Our \Cref{theorem:non_globally_optimal_stationary_point} includes the recent result~\cite[Theorem 4.2]{zheng2021analysis} as a special case in which the authors only consider a zero controller $\mK = 0$. Our main proof in the appendix, however, is motivated by that in~\cite[Theorem 4.2]{zheng2021analysis} with more complicated and careful calculations.

If the transfer function $\mathbf{G}(s)$ is not identically zero, then  $\tilde{\mK}$ in~\cref{eq:gradient_nonglobally_K} is a strict saddle point with probability one when randomly choosing $\Lambda$. Thus, we can apply the perturbed policy gradient method for ``escaping saddle'' \cite{jin2017escape}, so that the policy gradient iterations do not get stuck around these sub-optimal saddle points. We note that when $\mathbf{G}(s)$ is not identically zero, $\tilde{\mK}$ in~\cref{eq:gradient_nonglobally_K} may still have a zero Hessian (i.e., high-order saddle) if $\Lambda$ is chosen such that $\operatorname{eig}(-\Lambda) \subseteq \mathcal{Z}$; an explicit example is given \Cref{example:open-loop-stable} below. 
Therefore, our proposed perturbed policy gradient method for the LQG problem~\cref{eq:LQG} includes perturbations on $\Lambda$ as well as on the gradients. More details are given in \Cref{section:PGD}. %The perturbation on $\Lambda$ is a distinctive feature in the LQG problem~\cref{eq:LQG}. 

%However, if the Hessian is zero but it is a high order saddle point, then we don't know whether perturbed policy gradient escapes from it. 

%On the other hand, we know that, we can arbitrarily change $\Lambda$ in \cref{eq: nonmin} as long as it is Hurwitz, and the transfer function as well as the LQ cost remain the same.

\begin{remark}[Sufficiency of $\mathbf{G}(s) \equiv 0$ for global optimality and its interpretation] \label{remark:necessity}
\Cref{theorem:non_globally_optimal_stationary_point} holds with $q = n$, so $\mathbf{G}(s) \equiv 0, \forall s \in \mathbb{C}$ is also true when $\mK$ comes from the Riccati equations. In this case, we expect that $\mathbf{G}(s)$ in \cref{eq:necessary-condition-G} should have a nice control-theoretic interpretation. %\lina{see my comment} 
It is interesting to further investigate whether $\mathbf{G}(s) \equiv 0, \forall s\in \mathbb{C}$ is sufficient (or some other suitable conditions are needed) to certify the global optimality of $\tilde{\mK}$. %~\cref{eq:gradient_nonglobally_K}. 
\hfill $\square$
\end{remark}

We conclude this section by presenting three explicit LQG examples to illustrate \Cref{theorem:non_globally_optimal_stationary_point}.

\begin{example} \label{example:doyle}
We first consider the famous Doyle's LQG example~\cite{doyle1978guaranteed}, which has system matrices %\ys{Can you call them ``system matrices''? also example 2,3 below}
%\begin{subequations} %\label{eq:doyle_example}
\begin{equation*}
    A = \begin{bmatrix}
        1 & 1\\
        0 & 1
    \end{bmatrix},\; B = \begin{bmatrix}
        0\\
        1
    \end{bmatrix}, \; C = \begin{bmatrix} 1 & 0 \end{bmatrix},\;
\end{equation*}
and performance weights % \ys{Can you call them ``noise covariances''?}
\begin{equation*}
  W = 5\begin{bmatrix}
    1 & 1 \\
    1 & 1
    \end{bmatrix}, \; V = 1, \; Q = 5\begin{bmatrix}
    1 & 1 \\
    1 & 1
    \end{bmatrix}, \; R = 1. 
\end{equation*}
%\end{subequations}
The globally optimal LQG controller from \cref{eq:optimal-ARE} is 
\begin{equation*} %\label{eq:DoyleExample_OptimalController}
    A_{\mK}^\star = \begin{bmatrix}
       -4 &    1 \\
  -10 &-4
    \end{bmatrix}, \; {B}_{\mK}^\star = \begin{bmatrix}
    5\\5
    \end{bmatrix},  {C}_{\mK}^\star = \begin{bmatrix}
    -5 & -5
    \end{bmatrix}.
\end{equation*}
The Hessian $J_2(\mK)$ at ${\mK}^\star = \left[\begin{array}{c:c}
0 & {C}_{\mK}^\star \\[2pt]
    \hdashline {B}_{\mK}^\star & A_{\mK}^\star
\end{array}\right]
\in \mathcal{C}_{2}$ is
% $$
%   \operatorname{Hess}_{\,{\mK}^\star} = \begin{bmatrix}
%     1.1321  & -0.9375  &  0.9375   & 1.0889 &  -0.9056 &  -1.1321  & -1.2916  &  1.0889 \\
%   -0.9375  &  0.7871  & -0.7871  & -0.8980 &   0.7571  &  0.9375 &   1.0517 &  -0.8980\\
%     0.9375  & -0.7871  &  0.7871  &  0.8980 &  -0.7571  & -0.9375 &  -1.0517 &   0.8980 \\
%     1.0889  & -0.8980  &  0.8980  &  1.0488 &  -0.8685  & -1.0889  & -1.2492  &  1.0488 \\
%   -0.9056  &  0.7571 &  -0.7571&   -0.8685   & 0.7293  &  0.9056 &   1.0208 &  -0.8685\\
%   -1.1321  &  0.9375 &  -0.9375  & -1.0889  &  0.9056   & 1.1321  &  1.2916  & -1.0889\\
%   -1.2916  &  1.0517 &  -1.0517  & -1.2492  &  1.0208  &  1.2916  &  1.5084 &  -1.2492 \\
%     1.0889 &  -0.8980  &  0.8980  &  1.0488  & -0.8685 &  -1.0889  & -1.2492  &  1.0488
% \end{bmatrix} \times 10^5
% $$
positive semidefinite and has eigenvalues 
$
\lambda_1 =     8.1111 \times 10^5,  \lambda_2 =   6\,133.9, \lambda_3 =    131.2,  \lambda_4 =     6.36,  \lambda_5 = \cdots = \lambda_8 = 0
$
(see Appendix C for details). Four zero eigenvalues are expected due to the symmetry induced by the similarity transformation~\cite[Lemma 4.6]{zheng2021analysis}.   
We further compute the matrices in~\cref{eq:augmented-matrices} (their values can be found in the appendix), and we have 
    $$
    \begin{aligned}
    &(\bar{C}X_{\mathrm{op}}  +  V\bar{B}_{\mK}^\tr)(s I - A_{_{\mathrm{cl}}}^\tr )^{-1}Y_{\mathrm{op}} \bar{B} \\
    = &\frac{-12.5 s^3 - 604.2 s^2 - 1712 s - 566.7}{    s^4 + 6 s^3 + 11 s^2 + 6 s + 1}, 
      \end{aligned}
    $$
    and
    $$
      \begin{aligned}
        &(\bar{C}X_{\mathrm{op}}  +  V\bar{B}_{\mK}^\tr)(s I - A_{_{\mathrm{cl}}}^\tr )^{-1} \bar{C}_{\mK}^\tr R\\
        =& \frac{  12.5 s^3 + 604.2 s^2 + 1713 s + 566.7}{  s^4 + 6 s^3 + 11 s^2 + 6 s + 1}. 
    \end{aligned}
    $$
    Thus, we have 
    $$
    \mathbf{G}(s) \!=\! (\bar{C}X_{\mathrm{op}}  +  V\bar{B}_{\mK}^\tr)(s I - A_{_{\mathrm{cl}}}^\tr )^{-1} (Y_{\mathrm{op}}\bar{B} +  \bar{C}_{\mK}^\tr R ) \equiv 0.
    $$
This result that $\mathbf{G}(s)$ being identically zero is expected from~\Cref{theorem:non_globally_optimal_stationary_point} since $\mK^\star$ is globally optimal.  
\hfill $\square$
\end{example}

We then consider~\cite[Example 7]{zheng2021analysis} for which the globally optimal LQG controller is non-minimal in $\mathcal{C}_n$.

\begin{example} \label{example:non-minimal}
Consider an LQG instance with  matrices
$$
A=\begin{bmatrix}
0 & -1 \\ 1 & 0
\end{bmatrix},
\;
B = \begin{bmatrix}
1 \\ 0
\end{bmatrix},
\;
C = \begin{bmatrix}
1 & -1
\end{bmatrix}
$$
and performance weights
$$
W = \begin{bmatrix}
1 & -1 \\ -1 & 16
\end{bmatrix},
\;
V = 1, \; Q=\begin{bmatrix}
4 & 0 \\ 0 & 0
\end{bmatrix},
\;
R = 1.
$$
The globally optimal controller from \cref{eq:optimal-ARE} is given by
\begin{equation*} %\label{eq:LQGcontroller_Nonminimal_case}
A_{\mK}^\star = \begin{bmatrix}
-3 & 0 \\
5 & -4
\end{bmatrix},
{B}_{\mK}^\star
=
\begin{bmatrix}
1 \\ -4
\end{bmatrix},
{C}_{\mK}^\star =
\begin{bmatrix}
-2 & 0
\end{bmatrix}.
\end{equation*}
It is easy to verify that $(C_{\mK}^*,A_{\mK}^*)$ is not observable. The Hessian of $J_2(\mK)$ at  ${\mK}^\star \in \mathcal{C}_{2}$ is positive semidefinite with eigenvalues as
$
\lambda_1 =  581.5529, \lambda_2 =  7.1879, \lambda_3 =  0.2592, \lambda_4 = \cdots = \lambda_8 = 0.
$ (See Appendix D for details).  Four zero eigenvalues are expected, due to the symmetry by similarity transformations, and the other zero is caused by the unobservablility of $(C_{\mK}^*,A_{\mK}^*)$. Consider two reduced-order controllers 
$$
    \mK_1 = \left[\begin{array}{c:c}
    0 & -2 \\[2pt]
    \hdashline
    1 &  -3
    \end{array}\right] \in \mathcal{C}_1, \quad  \mK_2 = \left[\begin{array}{c:c}
    0 & 0.5 \\[2pt]
    \hdashline
    -4 &  -3
    \end{array}\right] \in \mathcal{C}_1,
$$
both of which are globally optimal. Thus, the following two full-order controllers
$$
\tilde{\mK}_1
    =\left[\begin{array}{c:cc}
    0 & -2 &  0 \\[2pt]
    \hdashline
    1 & -3 & 0 \\[-2pt]
    0 & 0 & \Lambda
    \end{array}\right], \quad \tilde{\mK}_2
    =\left[\begin{array}{c:cc}
    0 & 0.5 &  0 \\[2pt]
    \hdashline
    -4 & -3 & 0 \\[-2pt]
    0 & 0 & \Lambda
    \end{array}\right],  
$$
are globally optimal as well. From \Cref{theorem:non_globally_optimal_stationary_point}, we expect $\mathbf{G}(s) \equiv 0$ for both $\tilde{\mK}_1$ and $\tilde{\mK}_2$. 
For both of them, we can compute (details are in Appendix D) that 
    $$
    \begin{aligned}
    (\bar{C}X_{\mathrm{op}}  +  V\bar{B}_{\mK}^\tr)(s I - A_{_{\mathrm{cl}}}^\tr )^{-1}Y_{\mathrm{op}} \bar{B}&= \frac{26.5s +   56.5}{(s + 1)^2} \\
        (\bar{C}X_{\mathrm{op}}  +  V\bar{B}_{\mK}^\tr)(s I - A_{_{\mathrm{cl}}}^\tr )^{-1} \bar{C}_{\mK}^\tr R&= -\frac{26.5s +   56.5}{(s + 1)^2}. 
    \end{aligned}
    $$
    Thus, we have the expected result from \Cref{theorem:non_globally_optimal_stationary_point}
    that $
    \mathbf{G}(s)=(\bar{C}X_{\mathrm{op}}  +  V\bar{B}_{\mK}^\tr)(s I - A_{_{\mathrm{cl}}}^\tr )^{-1} (Y_{\mathrm{op}}\bar{B} +  \bar{C}_{\mK}^\tr R ) \equiv 0. %  \forall s \in \mathbb{C}.
    $
     \hfill\qed
\end{example}

%Do we need another example with strict saddle points? I think it is good to have such an example. At least, we can use any open stable systems with a zero controller $\mK = 0$.

Finally, we consider an LQG problem with a high-order saddle point. This high-order saddle point is predicted in \Cref{theorem:non_globally_optimal_stationary_point} and \cite[Theorem 4.2]{zheng2021analysis}.

\begin{example} \label{example:open-loop-stable}
Consider an LQG instance with an open-loop stable system, in which the problem data are
$$
\begin{aligned}
    A &= \begin{bmatrix}
   -0.5 & 0 \\ 0.5 & -1
\end{bmatrix},\ B = \begin{bmatrix}
   -1 \\ 1
\end{bmatrix},\ C = \begin{bmatrix}
   -\frac{1}{6} & \frac{11}{12}
\end{bmatrix}, 
\end{aligned}
$$
with weight matrices $W = Q = \Ib_2,\ V=R=1.$
Since this example is open-loop stable, \cite[Theorem 4.2]{zheng2021analysis} guarantees that $\tilde{\mK} = \left[\begin{array}{c:c}
0 & 0 \\[2pt]
    \hdashline 0 & \Lambda
\end{array}\right]
\in \mathcal{C}_{2}$ with any stable $ \Lambda \in \mathbb{R}^{2 \times 2}$ is a stationary point. At this controller, we can compute that the transfer function in~\cref{eq:necessary-condition-G} is 
$$
\mathbf{G}(s) = \frac{5(2s-1)}{108(2s^2 + 3s +1)}.
$$
The zero set $\mathcal{Z} = \{0.5\}$ contains a single value.~For any stable $\Lambda$ with $\operatorname{eig}(-\Lambda) \nsubseteq \mathcal{Z}$, the Hessian is indefinite by \Cref{theorem:non_globally_optimal_stationary_point}. For instance,  with $\Lambda = - \mathrm{diag}(0.5,0.1)$, the Hessian is indefinite with eigenvalues $\lambda_1 = 0.0561,\lambda_2 = -0.0561, \lambda_i = 0, i = 3, \ldots, 8$.
 However, we can check that if $\Lambda = -0.5\Ib_2$, (i.e. $
     A_{\mK} = -0.5 \Ib_2,\ B_{\mK} = 0,\ C_{\mK} = 0
$), its Hessian is degenerated to zero, implying that it is a high-order saddle. %\lina{I revised this part and please check my comments.}
% \Cref{theorem:non_globally_optimal_stationary_point} suggests $\Lambda = -0.5\Ib_2$, (i.e. $
%      A_{\mK} = -0.5 \Ib_2,\ B_{\mK} = 0,\ C_{\mK} = 0
% $) is a high-order saddle; indeed, the Hessian at this point is identically zero. 
 % A high order saddle \cite[Theorem 4.2]{zheng2021analysis} is 
Our proposed perturbed gradient descent algorithm in the next section can escape this type of high-order saddles efficiently. 
%In the numerical example, we will initialize the iterations close to the high order saddle and apply our proposed perturbed gradient descent algorithm and show that the transfer function will converge to $\mathcal{T}^*(s)$.     
\hfill\qed
\end{example}

\section{Perturbed policy gradient method} \label{section:PGD}
Inspired by \Cref{theorem:all-stationary-points,theorem:non_globally_optimal_stationary_point}, we introduce~a~novel perturbed policy gradient method that combines~a~structural perturbation on $\Lambda$ in \cref{eq:gradient_nonglobally_K} with a standard perturbation on gradients~\cite{ge2015escaping,jin2017escape}. Numerical results confirm that our perturbed policy gradient method can escape high-order saddles more efficiently, than either vanilla policy gradient or standard perturbed policy gradient~\cite{jin2017escape}.

\subsection{Algorithm setup}

Recent work has established that variations of gradient descent can escape strict saddle-points -- points~at~which the minimum eigenvalue of the Hessian is strictly negative. For example, stochastic gradient descent \cite{ge2015escaping}, gradient descent with appropriate random perturbation~\cite{jin2017escape} or with cubic regularization sub-oracle~\cite{carmon2016gradient} are proven to escape strict saddles and visit an approximate local minimum in polynomial time with high probability.

Our method combines the standard perturbed gradient descent \cite[Algorithm 2]{jin2017escape} with an additional oracle of random structural perturbation on $\Lambda$. %When the norm of gradient is large, we simply run a gradient step. % 
Our perturbed policy gradient descent is listed in \Cref{algo:1}. We note that \Cref{algo:1} is a \emph{prototype} algorithm in the sense that some quantities (e.g., order-reduction, gradient and Hessian~Lipschitz constants) of the LQG problem require more investigations. Convergence conditions and further quantitative analysis of our algorithm are also left for future work. 

The high-level ideas are described below. 
%
%\textbf{Novel design on top of escaping saddle algorithm:} 

\vspace{-1mm}

\begin{itemize}
\setlength{\itemsep}{3pt}
    \item When the gradient of a controller $\K_t$ is close~to~zero, we check whether it is minimal, i.e., the smallest Hankel singular value of the controllability/observability matrix is bounded away from zero (for the connection with Hankel singular values and controllability/observability, please refer \cite[Chapter 7]{zhou1996robust}).
    %identify its least controllable-observable state.
    \item If $\K_t$ is controllable and observable (i.e., minimal), it is close to be  globally optimal by \Cref{theorem:all-stationary-points}. We terminate the algorithm. 
    \item If $\K_t$ is non-minimal, we perform a minimal realization (e.g., Kalman decomposition or balance realization) to get a controller in the form of \cref{eq:stationary_nonglobally_K-augmented}.
    \item We then choose a symmetric and stable $\Lambda$ randomly. From \cref{theorem:non_globally_optimal_stationary_point}, we expect that the resulting controller is close to a strict saddle point. 
    \item We apply a random perturbation on the gradients. The random perturbation is i.i.d. Gaussian variables, with small magnitudes, added to each entry of $A_{\mK},B_{\mK},C_{\mK}$ such that the controller is still stabilizing.  We run a few gradient descent iterations afterwards. We expect that these gradient descent iterations will escape from the strict saddle point.  
    
  %  \item When it is close to non-controllable-observable, its dynamic $\Lambda$ is randomly reselected. After we perturb the eigenvalue of $\Lambda$, we perturb the controller and follow the protocol in perturbed gradient descent to escape a potential saddle. Note that, the steps of reselecting $\Lambda$ is in addition to the routine of general escaping saddle algorithm \cite{jin2017escape}. It uses the special formulation of the problem, that reselecting $\Lambda$ does not change the function value, but can transport a high order saddle point to a strict saddle once $\Gb(s)$ is not constantly zero for all $\Lambda$.

%\item If by changing $\Lambda$, we find a strict saddle point, then gradient descent iterations following the random perturbation will make the sequence escape from saddle.  
\item We terminate the algorithm when the algorithm reaches the predefined number of steps $T$. 
\end{itemize}

\vspace{-1mm}

 \Cref{algo:1} can escape a large class of (but not all) high-order saddles at which $\mathbf{G}(s)$ in~\cref{eq:necessary-condition-G} is not identically zero.  
When \Cref{algo:1} terminates, it is likely to produce an approximately global minimum or return a point at which the transfer function $\mathbf{G}(s)$ in~\cref{eq:necessary-condition-G} is close to zero. In the later case, the point may not be globally optimal, and this is related to the sufficiency of $\mathbf{G}(s) \equiv 0$ for  global optimality in \Cref{remark:necessity}.  

\begin{algorithm}[tp!]
\caption{Perturbed policy gradient}\label{algo:1}
\begin{algorithmic}[1]
\REQUIRE{1) Loss $J(\K)$ with its gradient. 2) Thresholds $\gth$, $\iota$. 3) Constant  $T$, $\tau$, step size $\eta$. 4) Function $\lbmin(\K)$ that returns the minimum singular value of the Hankel matrix of $\K$. 5) Function $\redord(\K)$ that finds the approximate order of $\K$.} 
\STATE{Set $t=0$, $t_{\text{perturb}} = -\tau- 1$ and initialize a stabilizing controller $\K_0$.}
%\STATE{$\backslash\backslash$ \emph{Note all eigenvalues of Hankel matrix of $\K$ are negative, so $\lbmin$ gets the value that is closest to $0$.}}
\WHILE {$t\leq T$}    
    \IF {$\|\nabla J(\K_t)\| \le \gth$ and $\lbminK \ge \iota$}
     \STATE{\textbf{output} $\K$}
    \ELSIF {$\|\nabla J(\K_t)\| \le \gth$ and $\lbminK \le \iota$ and $t-t_{\text{perturb}}>\tau$}
     \STATE{$\hat{\K}_t, q_t \leftarrow \redord(\K_t)$ where $q_t$ is the order after model reduction; \label{algline:perturbe_Lambda-1}} 
     \STATE{$\Lambda_t\leftarrow \lambda I_{n-q_t}$ with $\lambda <0$ randomly selected; \label{algline:perturbe_Lambda}} %such that $G(s)\ne 0$.}
     \STATE {$\K_t \leftarrow \diag(\hat{\K}_t, \Lambda_t)$ as in~\cref{eq:gradient_nonglobally_K} (Theorem \ref{theorem:non_globally_optimal_stationary_point}); \label{algline:perturbe_Lambda-3}} 
     \STATE{$\K_t\leftarrow \K_t + \xi_t$ with $\xi_t$ uniformly sampled from $\mathbb{B}_{\K_t}(r)$; \label{algline:perturbe_epsilon}}
     \STATE{$t_\text{perturb}\leftarrow t$;}
    \ENDIF
    \STATE {$\K_{t+1} \leftarrow \K_t - \eta \nabla J(\K_{t});$}
    \STATE{$t\leftarrow t+1$;} 
   \ENDWHILE    
\end{algorithmic}
\end{algorithm}

\subsection{Numerical results}

We implement \Cref{algo:1}, and consider \Cref{example:open-loop-stable} for numerical comparison with three other algorithms:
\begin{enumerate}
    \item Vanilla policy gradient; 
    \item Standard perturbed policy gradient \cite{jin2017escape} (with no perturbation on dynamics $\Lambda$, i.e., no Lines~\ref{algline:perturbe_Lambda-1}-\ref{algline:perturbe_Lambda-3} in \Cref{algo:1});
    \item Perturb the dynamics $\Lambda$ but with no perturbation on gradients (i.e., no Line~\ref{algline:perturbe_epsilon} in \Cref{algo:1}.). 
\end{enumerate}

% The system and problem parameters are 
% $$
% \begin{aligned}
%     A &= \begin{bmatrix}
%   -0.5 & 0 \\ 0.5 & -1
% \end{bmatrix},\ B = \begin{bmatrix}
%   -1 \\ 1
% \end{bmatrix},\ C = \begin{bmatrix}
%   -2 & 11
% \end{bmatrix}/12, \\
% W &= Q = \Ib_2,\ V=R=1.
% \end{aligned}
% $$
%The  transfer function of the globally optimal controller is $\mathcal{T}(s) = \frac{2.97s + 2.69}{2.28s^2 + 15.7s + 2.47}$.
The globally optimal controller from \cref{eq:optimal-ARE} for the LQG instance in \Cref{example:open-loop-stable}  is 
    \begingroup
    \setlength\arraycolsep{0.8pt}
\def\arraystretch{0.9}
\begin{align*}
    A_{\mK}^\star \!=\! \begin{bmatrix}
       -1.10 &   0.13\\
    1.19 &  -1.64\\
    \end{bmatrix},\ {B}_{\mK}^\star \!=\! \begin{bmatrix}
       0.11 \\
    0.45
    \end{bmatrix}, 
    {C}_{\mK}^\star \!=\! \begin{bmatrix}
       0.62  & -0.22
    \end{bmatrix}.
\end{align*}
\endgroup
% The transfer function is 
% \begin{align*}
%     \mathcal{K}^\star(s) = \frac{-14.09s +1.548}{405.6s^2 + 1111s + 670.2}.
% \end{align*}
To illustrate the performance of different algorithms, we initialize the controller at 
\begin{align} \label{eq:initialK0}
    A_{\mK,0} \!=\! \text{-}0.5 \Ib_2,
    B_{\mK,0} \!=\!  \begin{bmatrix}
   0\\0.01
\end{bmatrix}, C_{\mK,0} \!=\! \begin{bmatrix}
   0, \text{-}0.01 
\end{bmatrix}.
\end{align}
As discussed in \Cref{example:open-loop-stable}, this initial point is close to a high-order saddle $
    A_{\mK} = -0.5 \Ib_2, B_{\mK} = 0, C_{\mK} = 0.
$
 We add a perturbation to the first iteration and run gradient descent with the fixed step size. The perturbations are different, as discussed at the beginning of this section. 
 
 The results are shown in \Cref{fig: exp}: the left sub-figure shows the suboptimality gap, and the right one shows the norm of graidents at each iteration. 
 Our \Cref{algo:1} implements both perturbations: 1) identifying an one-dimensional $\Lambda$ as in the standard form \eqref{eq:gradient_nonglobally_K} and change it randomly, and 2) randomly perturb all variables with a small quantity $0.01$. As shown in \Cref{fig: exp}, our \Cref{algo:1} can escape this high-order saddle faster than the  other three algorithms, including  standard PGD in \cite{jin2017escape} (in which no perturbation on  $\Lambda$~was~applied). 
 
 %The ``only dynamics'' curve changes the dynamic matrix $\Lambda$ only and the ``only random'' only applies random perturbation as in. There is also a curve with no perturbation. We can see that the first algorithm with both perturbations escapes from the saddle point first whereas the last method with no perturbation is the slowest. 
\setlength{\textfloatsep}{.5em}
\begin{figure}[t]
\centering
\includegraphics[width=0.5\textwidth]{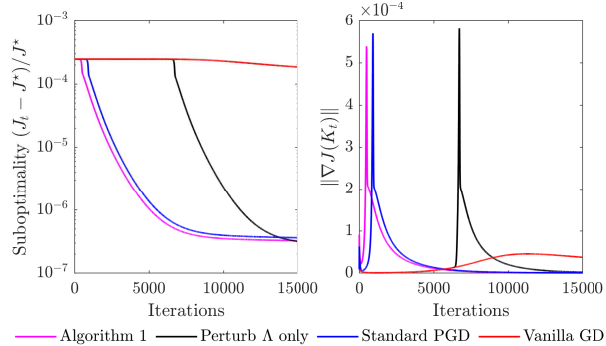}
\caption{Comparison of different perturbed and Vanilla policy gradient (PG) methods: Our \Cref{algo:1}, Vanilla GD, standard PGD in \cite{jin2017escape} (with no perturbation on dynamics $\Lambda$), and PGD with perturbation on dynamics $\Lambda$ only. These algorithms all start from the same point \cref{eq:initialK0} near a high-order saddle, and applied fixed step-size gradient descent iterations. Left: suboptimality $\frac{J(\mK_t) - J^\star}{J^\star}$; Right: norm of gradients $\|\nabla J(\mK_t)\|$.%We zoom in the same curves in the second row to see the difference between two fastest escaping-saddle algorithms.
}\label{fig: exp}
\end{figure}

\balance
\section{Conclusions} \label{section:conclusion}

We have proposed a novel PGD algorithm (cf. \Cref{algo:1}) to escape high-order saddles of LQG. %, specifically how to escape from the high-order saddles by first order method. 
Our PGD algorithm combines the inherent structure of LQG control with standard perturbation on gradients. We have shown the structure of all stationary points after model reduction (cf. \Cref{theorem:all-stationary-points}). We have also introduced a reparameterization procedure with an intriguing transfer function $\mathbf{G}(s)$ at any stationary point (cf. \Cref{theorem:non_globally_optimal_stationary_point}). If $\mathbf{G}(s)\not\equiv 0$, we can certify that the high-order saddle can be made as a strict saddle by the reparameterization. %Finally, we %present the first order perturbed policy optimization method for LQG, and show that the
Numerical simulations confirmed that \Cref{algo:1} combining the reparameterization with random perturbation on gradients can accelerate the speed of escaping high-order saddles. % by numerical simulations.
Ongoing and future directions include quantitative analysis of \Cref{algo:1}. {We are also interested in the sufficiency of $\mathbf{G}(s) \equiv 0$ (or~other~conditions are needed) for global optimality of LQG (see \Cref{remark:necessity}).}

%\ys{appendix after refs, proof}

\bibliographystyle{IEEEtran}
\bibliography{references}

\begin{thebibliography}{10}
\providecommand{\url}[1]{#1}
\csname url@rmstyle\endcsname
\providecommand{\newblock}{\relax}
\providecommand{\bibinfo}[2]{#2}
\providecommand\BIBentrySTDinterwordspacing{\spaceskip=0pt\relax}
\providecommand\BIBentryALTinterwordstretchfactor{4}
\providecommand\BIBentryALTinterwordspacing{\spaceskip=\fontdimen2\font plus
\BIBentryALTinterwordstretchfactor\fontdimen3\font minus
  \fontdimen4\font\relax}
\providecommand\BIBforeignlanguage[2]{{%
\expandafter\ifx\csname l@#1\endcsname\relax
\typeout{** WARNING: IEEEtran.bst: No hyphenation pattern has been}%
\typeout{** loaded for the language `#1'. Using the pattern for}%
\typeout{** the default language instead.}%
\else
\language=\csname l@#1\endcsname
\fi
#2}}

\bibitem{kalman1960contributions}
R.~E. Kalman, ``Contributions to the theory of optimal control,'' \emph{Bol.
  soc. mat. mexicana}, vol.~5, no.~2, pp. 102--119, 1960.

\bibitem{dullerud2013course}
G.~E. Dullerud and F.~Paganini, \emph{A course in robust control theory: a
  convex approach}.\hskip 1em plus 0.5em minus 0.4em\relax Springer Science \&
  Business Media, 2013, vol.~36.

\bibitem{zhou1996robust}
K.~Zhou, J.~C. Doyle, K.~Glover, \emph{et~al.}, \emph{Robust and optimal
  control}.\hskip 1em plus 0.5em minus 0.4em\relax Prentice hall New Jersey,
  1996, vol.~40.

\bibitem{bertsekas2012dynamic}
D.~Bertsekas, \emph{Dynamic programming and optimal control: Volume I}.\hskip
  1em plus 0.5em minus 0.4em\relax Athena scientific, 2012, vol.~1.

\bibitem{hewer1971iterative}
G.~Hewer, ``An iterative technique for the computation of the steady state
  gains for the discrete optimal regulator,'' \emph{IEEE Trans. Automat.
  Contr.}, vol.~16, no.~4, pp. 382--384, 1971.

\bibitem{lancaster1995algebraic}
P.~Lancaster and L.~Rodman, \emph{Algebraic riccati equations}.\hskip 1em plus
  0.5em minus 0.4em\relax Clarendon press, 1995.

\bibitem{balakrishnan2003semidefinite}
V.~Balakrishnan and L.~Vandenberghe, ``Semidefinite programming duality and
  linear time-invariant systems,'' \emph{IEEE Trans. Automat. Contr.}, vol.~48,
  no.~1, pp. 30--41, 2003.

\bibitem{recht2019tour}
B.~Recht, ``A tour of reinforcement learning: The view from continuous
  control,'' \emph{Annual Review of Control, Robotics, and Autonomous Systems},
  vol.~2, pp. 253--279, 2019.

\bibitem{fazel2018global}
M.~Fazel, R.~Ge, S.~M. Kakade, and M.~Mesbahi, ``Global convergence of policy
  gradient methods for linearized control problems,'' \emph{arXiv preprint
  arXiv:1801.05039}, 2018.

\bibitem{mohammadi2019global}
H.~Mohammadi, A.~Zare, M.~Soltanolkotabi, and M.~R. Jovanovi{\'c}, ``Global
  exponential convergence of gradient methods over the nonconvex landscape of
  the linear quadratic regulator,'' in \emph{2019 IEEE 58th Conference on
  Decision and Control (CDC)}.\hskip 1em plus 0.5em minus 0.4em\relax IEEE,
  2019, pp. 7474--7479.

\bibitem{malik2019derivative}
D.~Malik, A.~Pananjady, K.~Bhatia, K.~Khamaru, P.~Bartlett, and M.~Wainwright,
  ``Derivative-free methods for policy optimization: Guarantees for linear
  quadratic systems,'' in \emph{The 22nd International Conference on Artificial
  Intelligence and Statistics}.\hskip 1em plus 0.5em minus 0.4em\relax PMLR,
  2019, pp. 2916--2925.

\bibitem{sun2021learning}
Y.~Sun and M.~Fazel, ``Learning optimal controllers by policy gradient: Global
  optimality via convex parameterization,'' in \emph{2021 60th IEEE Conference
  on Decision and Control (CDC)}.\hskip 1em plus 0.5em minus 0.4em\relax IEEE,
  2021, pp. 4576--4581.

\bibitem{furieri2020learning}
L.~Furieri, Y.~Zheng, and M.~Kamgarpour, ``Learning the globally optimal
  distributed {LQ} regulator,'' in \emph{Learning for Dynamics and
  Control}.\hskip 1em plus 0.5em minus 0.4em\relax PMLR, 2020, pp. 287--297.

\bibitem{tu2019gap}
S.~Tu and B.~Recht, ``The gap between model-based and model-free methods on the
  linear quadratic regulator: An asymptotic viewpoint,'' in \emph{Conference on
  Learning Theory}.\hskip 1em plus 0.5em minus 0.4em\relax PMLR, 2019, pp.
  3036--3083.

\bibitem{zhang2019policy}
K.~Zhang, Z.~Yang, and T.~Basar, ``Policy optimization provably converges to
  nash equilibria in zero-sum linear quadratic games,'' \emph{Advances in
  Neural Information Processing Systems}, vol.~32, pp. 11\,602--11\,614, 2019.

\bibitem{umenberger2022globally}
J.~Umenberger, M.~Simchowitz, J.~C. Perdomo, K.~Zhang, and R.~Tedrake,
  ``Globally convergent policy search over dynamic filters for output
  estimation,'' \emph{preprint arXiv:2202.11659}, 2022.

\bibitem{duan2022optimization}
J.~Duan, W.~Cao, Y.~Zheng, and L.~Zhao, ``On the optimization landscape of
  dynamical output feedback linear quadratic control,'' \emph{arXiv preprint
  arXiv:2201.09598}, 2022.

\bibitem{bu2019lqr}
J.~Bu, A.~Mesbahi, M.~Fazel, and M.~Mesbahi, ``{LQR} through the lens of first
  order methods: Discrete-time case,'' \emph{arXiv preprint arXiv:1907.08921},
  2019.

\bibitem{hu2022connectivity}
B.~Hu and Y.~Zheng, ``Connectivity of the feasible and sublevel sets of dynamic
  output feedback control with robustness constraints,'' \emph{arXiv preprint
  arXiv:2203.11177}, 2022.

\bibitem{zheng2021analysis}
Y.~Zheng, Y.~Tang, and N.~Li, ``Analysis of the optimization landscape of
  linear quadratic gaussian ({LQG}) control,'' \emph{arXiv preprint
  arXiv:2102.04393}, 2021.

\bibitem{ge2015escaping}
R.~Ge, F.~Huang, C.~Jin, and Y.~Yuan, ``Escaping from saddle points—online
  stochastic gradient for tensor decomposition,'' in \emph{Conference on
  learning theory}.\hskip 1em plus 0.5em minus 0.4em\relax PMLR, 2015, pp.
  797--842.

\bibitem{jin2017escape}
C.~Jin, R.~Ge, P.~Netrapalli, S.~M. Kakade, and M.~I. Jordan, ``How to escape
  saddle points efficiently,'' in \emph{International Conference on Machine
  Learning}.\hskip 1em plus 0.5em minus 0.4em\relax PMLR, 2017, pp. 1724--1732.

\bibitem{carmon2016gradient}
Y.~Carmon and J.~C. Duchi, ``Gradient descent efficiently finds the
  cubic-regularized non-convex newton step,'' \emph{arXiv preprint
  arXiv:1612.00547}, 2016.

\bibitem{doyle1978guaranteed}
J.~C. Doyle, ``Guaranteed margins for {LQG} regulators,'' \emph{IEEE Trans.
  Automat. Contr.}, vol.~23, no.~4, pp. 756--757, 1978.

\end{thebibliography}

\onecolumn 
\appendices

\section*{Appendix}

In this appendix, we complete the proofs to \Cref{theorem:all-stationary-points,theorem:non_globally_optimal_stationary_point}, and present some further details on the Hessian computations in \Cref{example:doyle,example:non-minimal,example:open-loop-stable}. 

\subsection{Proof of \Cref{theorem:all-stationary-points}}

Our proof for \Cref{theorem:all-stationary-points} is built on the classical Kalman decomposition for linear time-invariant (LTI) systems and a recent result in~\cite[Theorem 4.1]{zheng2021analysis}. We first recall the celebrated Kalman canonical decomposition.

\begin{theorem}[Kalman decomposition{~\cite[Theorem 3.10]{zhou1996robust}}] \label{theorem:kalman}
Given any LTI system of the form
\begin{equation}
    \begin{aligned}
      \dot{x} &= Ax  + Bu \\
      y &=Cx + Du,
    \end{aligned}
\end{equation}
there exists a nonsingular coordinate transformation $\tilde{x} = Tx$ such that 
\begin{equation} \label{eq:kalman-decomposition}
    \begin{aligned}
      \begin{bmatrix}
         \dot{\tilde{x}}_{\mathrm{co}} \\ \dot{\tilde{x}}_{\mathrm{c\bar{o}}} \\\dot{\tilde{x}}_{\mathrm{\bar{c}o}} \\ \dot{\tilde{x}}_{\mathrm{\bar{c}\bar{o}}}
      \end{bmatrix} &= 
      \begin{bmatrix}
         \tilde{A}_{\mathrm{co}} & 0 & \tilde{A}_{13} & 0 \\
         \tilde{A}_{21} & \tilde{A}_{\mathrm{c\bar{o}}} & \tilde{A}_{23} & \tilde{A}_{24} \\
         0 & 0 & \tilde{A}_{\mathrm{\bar{c}o}} & 0 \\
         0 & 0 & \tilde{A}_{43} & \tilde{A}_{\mathrm{\bar{c}\bar{o}}}
               \end{bmatrix}
         \begin{bmatrix}
         {\tilde{x}}_{\mathrm{co}} \\ {\tilde{x}}_{\mathrm{c\bar{o}}} \\{\tilde{x}}_{\mathrm{\bar{c}o}} \\ {\tilde{x}}_{\mathrm{\bar{c}\bar{o}}}
      \end{bmatrix} + \begin{bmatrix}
         \tilde{B}_{\mathrm{co}} \\ \tilde{B}_{\mathrm{c\bar{o}}} \\ 0 \\ 0
      \end{bmatrix} u \\
      y &= \begin{bmatrix}
         \tilde{C}_{\mathrm{co}} & 0 &\tilde{C}_{\mathrm{\bar{c}{o}}} & 0 
      \end{bmatrix}\begin{bmatrix}
         {\tilde{x}}_{\mathrm{co}} \\ {\tilde{x}}_{\mathrm{c\bar{o}}} \\{\tilde{x}}_{\mathrm{\bar{c}o}} \\ {\tilde{x}}_{\mathrm{\bar{c}\bar{o}}}
      \end{bmatrix} + D u, 
    \end{aligned}
\end{equation}
where the vector $\tilde{x}_{\mathrm{co}}$ is controllable and observable, $\tilde{x}_{\mathrm{c\bar{o}}}$ is controllable but unobservable, $\tilde{x}_{\mathrm{\bar{c}{o}}}$ is observable but uncontrollable, and $\tilde{x}_{\mathrm{\bar{c}\bar{o}}}$ is uncontrollable and unobservable.
Moreover, the transfer matrix from $u$ to $y$ is given by
$$
    \mathbf{G}(s) = C (sI - A)^{-1}B + D = \tilde{C}_{\mathrm{co}} (sI - \tilde{A}_{\mathrm{co}})^{-1}\tilde{B}_{\mathrm{co}} + D. 
$$
\end{theorem}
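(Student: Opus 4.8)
The plan is to build an explicit change of coordinates adapted to two canonical $A$-invariant subspaces and then read off both the block pattern and the transfer matrix by elementary linear algebra. I would introduce the \emph{reachable subspace} $\mathcal{R}=\mathrm{Range}\,[\,B\ \ AB\ \ \cdots\ \ A^{n-1}B\,]$ and the \emph{unobservable subspace} $\mathcal{N}=\mathrm{Ker}\,[\,C^\tr\ \ (CA)^\tr\ \ \cdots\ \ (CA^{n-1})^\tr\,]^\tr$. By the Cayley--Hamilton theorem both are $A$-invariant, with the inclusions $\mathrm{Range}\,B\subseteq\mathcal{R}$ and $\mathcal{N}\subseteq\mathrm{Ker}\,C$; and since an intersection of invariant subspaces is invariant, $\mathcal{R}\cap\mathcal{N}$ is $A$-invariant as well.

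Next I would assemble a basis of $\mathbb{R}^n$ respecting all four objects at once: take a basis of $\mathcal{R}\cap\mathcal{N}$ (these become the $\mathrm{c\bar o}$ coordinates), extend it to a basis of $\mathcal{R}$ (new vectors $\mathrm{co}$), independently extend the same basis of $\mathcal{R}\cap\mathcal{N}$ to a basis of $\mathcal{N}$ (new vectors $\mathrm{\bar c\bar o}$), and finally complete to $\mathbb{R}^n$ (remaining vectors $\mathrm{\bar c o}$). Ordering the groups as $(\mathrm{co},\mathrm{c\bar o},\mathrm{\bar c o},\mathrm{\bar c\bar o})$ defines $\tilde x=Tx$ with $\tilde A=TAT^{-1}$, $\tilde B=TB$, $\tilde C=CT^{-1}$; in these coordinates $\mathcal{R}$ is spanned by blocks $1,2$, $\mathcal{N}$ by blocks $2,4$, and $\mathcal{R}\cap\mathcal{N}$ by block $2$. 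The zeros in \cref{eq:kalman-decomposition} then follow purely from invariance: $A$-invariance of $\mathcal{R}$ forces $\tilde A_{31}=\tilde A_{41}=\tilde A_{32}=\tilde A_{42}=0$ (indices labeling the four groups in order); $A$-invariance of $\mathcal{N}$ forces $\tilde A_{12}=\tilde A_{14}=\tilde A_{34}=0$; $\mathrm{Range}\,B\subseteq\mathcal{R}$ zeroes the bottom two row-blocks of $\tilde B$; and $\mathcal{N}\subseteq\mathrm{Ker}\,C$ zeroes the $\mathrm{c\bar o},\mathrm{\bar c\bar o}$ column-blocks of $\tilde C$, which reproduces the claimed array.

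Because a similarity transformation leaves the transfer matrix unchanged, it suffices to evaluate $\tilde C(sI-\tilde A)^{-1}\tilde B+D$, which I would do by solving $(sI-\tilde A)z=\tilde B$ blockwise for $s$ not an eigenvalue. The vanishing blocks $\tilde A_{31},\tilde A_{32},\tilde A_{34},\tilde A_{41},\tilde A_{42}$ together with $\tilde B$ having no $\mathrm{\bar c o},\mathrm{\bar c\bar o}$ rows give $(sI-\tilde A_{\mathrm{\bar c o}})z_{\mathrm{\bar c o}}=0$ and then $(sI-\tilde A_{\mathrm{\bar c\bar o}})z_{\mathrm{\bar c\bar o}}=0$, hence $z_{\mathrm{\bar c o}}=z_{\mathrm{\bar c\bar o}}=0$. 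The first block row, using $\tilde A_{12}=\tilde A_{14}=0$, then reduces to $(sI-\tilde A_{\mathrm{co}})z_{\mathrm{co}}=\tilde B_{\mathrm{co}}$. Since the $\mathrm{c\bar o}$ and $\mathrm{\bar c\bar o}$ columns of $\tilde C$ vanish and $z_{\mathrm{\bar c o}}=0$, left-multiplying by $\tilde C$ retains only the $\mathrm{co}$ term, giving $\mathbf{G}(s)=\tilde C_{\mathrm{co}}(sI-\tilde A_{\mathrm{co}})^{-1}\tilde B_{\mathrm{co}}+D$, and by rational continuation this identity holds for all $s$.

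The main obstacle is purely bookkeeping rather than analysis: one must verify that the three successive basis extensions of $\mathcal{R}\cap\mathcal{N}$ can be chosen so that the four index groups are disjoint and jointly span $\mathbb{R}^n$, and that the chosen ordering aligns every vanishing block with the correct row and column groups. The invariance facts themselves are immediate from Cayley--Hamilton, so once a correct basis is fixed the remainder is verification rather than discovery.
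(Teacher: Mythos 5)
Your proof is correct in its essentials, but note that the paper never proves this statement at all: \Cref{theorem:kalman} is recalled verbatim from \cite[Theorem 3.10]{zhou1996robust} purely as a tool for the proof of \Cref{theorem:all-stationary-points}, so there is no in-paper argument to compare against, and a self-contained derivation is genuinely additional content. What you give is the standard construction: form the reachable subspace $\mathcal{R}$ and unobservable subspace $\mathcal{N}$, pick a basis adapted to the chain $\mathcal{R}\cap\mathcal{N}\subseteq\mathcal{R},\,\mathcal{N}\subseteq\mathcal{R}+\mathcal{N}\subseteq\mathbb{R}^n$, and observe that every zero block in \cref{eq:kalman-decomposition} is forced by one of the four inclusions $A\mathcal{R}\subseteq\mathcal{R}$, $A\mathcal{N}\subseteq\mathcal{N}$, $\mathrm{Range}\,B\subseteq\mathcal{R}$, $\mathcal{N}\subseteq\mathrm{Ker}\,C$. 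Your bookkeeping concern resolves exactly as you suspect: the union of the three partial bases spans $\mathcal{R}+\mathcal{N}$ with the correct cardinality, hence is independent, and the fourth group completes it; your zero-pattern list matches the theorem's array block for block, and the block-triangular solve of $(sI-\tilde{A})z=\tilde{B}$ followed by rational continuation correctly yields the transfer-matrix identity. The one item you leave implicit is the verification of the labels themselves: the theorem asserts, e.g., that $\tilde{x}_{\mathrm{co}}$ is controllable \emph{and observable}, which in its precise form means the subsystem $(\tilde{A}_{\mathrm{co}},\tilde{B}_{\mathrm{co}},\tilde{C}_{\mathrm{co}})$ is a controllable and observable (hence minimal) realization. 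This needs two short standard facts --- quotienting a controllable pair by an invariant subspace preserves controllability, and restricting an observable pair to an invariant subspace preserves observability --- and it is not cosmetic for this paper, since the sentence immediately following the theorem (that $\dim\tilde{x}_{\mathrm{co}}$ equals the order of the minimal realization) is precisely what the proof of \Cref{theorem:all-stationary-points} relies on when it identifies $\tilde{\mK}_{\mathrm{co}}$ with a minimal realization of $\mK$. Adding those two arguments would make your proof complete.
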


The dimension of $\tilde{x}_{\mathrm{co}}$ is the same as the dimension of the minimal realization of $ \mathbf{G}(s)$. The result in~\cite[Theorem 4.1]{zheng2021analysis} states that any stationary point of $J_{q}$ can be augmented to be a stationary point of $J_{q+q'}$ for any $q'>0$ over $\mathcal{C}_{q+q'}$ with the same LQG cost.

\begin{theorem}[{\!\cite[Theorem 4.1]{zheng2021analysis} }]  \label{theorem:high-order-stationary-point}
Let $q\geq 1$ be arbitrary. Suppose there exists $\hat{\mK}=\begin{bmatrix}
0 & \hat{C}_{\mK} \\ \hat{B}_{\mK} & \hat{A}_{\mK}
\end{bmatrix}
\in \mathcal{C}_{q}$ such that $\nabla J_{q}(\hat{\mK})=0$. 
Then for any $q'\geq 1$ and any stable $\Lambda\in\mathbb{R}^{q'\times q'}$, the following controller
\begin{equation*} %\label{eq:gradient_nonglobally_K-}
         \tilde{\mK}
    =\left[\begin{array}{c:cc}
    0 & \hat{C}_{\mK} &  0 \\[2pt]
    \hdashline
    \hat{B}_{\mK} & \hat{A}_{\mK} & 0 \\[-2pt]
    0 & 0 & \Lambda
    \end{array}\right] \in \mathcal{C}_{q+q'}
\end{equation*}
is a stationary point of $J_{q+q'}$ over $\mathcal{C}_{q+q'}$ satisfying $J_{q+q'}\big( \tilde{\mK}\big)
=J_{q}(\hat{\mK})$.
\end{theorem}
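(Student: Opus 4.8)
The plan is to establish all three assertions — that $\tilde{\mK}$ is internally stabilizing, that it attains the same cost, and that its gradient vanishes — by directly solving the Lyapunov equations \cref{eq:LyapunovLQGX}--\cref{eq:LyapunovLQGY} for $\tilde{\mK}$ and exploiting the fact that the $\Lambda$-coordinate is completely decoupled from the rest of the closed-loop dynamics. Membership $\tilde\mK\in\mathcal{C}_{q+q'}$ comes first. Since $C_{\tilde{\mK}}=[\hat C_\mK\ 0]$, $B_{\tilde{\mK}}=[\hat B_\mK^\tr\ 0]^\tr$ and $A_{\tilde\mK}=\diag(\hat A_\mK,\Lambda)$, reordering the closed-loop states as $\{(x,\xi_{\hat\mK}),\,\xi_\Lambda\}$ puts the matrix \cref{eq:closedloopmatrix} for $\tilde\mK$ into block-diagonal form
$$
\tilde A_{\mathrm{cl}}=\begin{bmatrix} \hat A_{\mathrm{cl}} & 0\\ 0 & \Lambda\end{bmatrix},\qquad \hat A_{\mathrm{cl}}=\begin{bmatrix} A & B\hat C_\mK\\ \hat B_\mK C & \hat A_\mK\end{bmatrix}.
$$
Its spectrum is $\operatorname{eig}(\hat A_{\mathrm{cl}})\cup\operatorname{eig}(\Lambda)$; as $\hat\mK\in\mathcal C_q$ makes $\hat A_{\mathrm{cl}}$ Hurwitz and $\Lambda$ is stable by hypothesis, $\tilde A_{\mathrm{cl}}$ is Hurwitz, so $\tilde\mK\in\mathcal C_{q+q'}$.

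Next I would solve for the two gramians. Because $C_{\tilde\mK}$ and $B_{\tilde\mK}$ vanish on the $\Lambda$-coordinate, the weights $Q_{\mathrm{cl},\tilde\mK}$ and $W_{\mathrm{cl},\tilde\mK}$ have zero rows and columns in that block. Partitioning $X_{\tilde\mK}$ along $\{(x,\xi_{\hat\mK}),\,\xi_\Lambda\}$, the Lyapunov equation \cref{eq:LyapunovLQGX} splits into a $(1,1)$-block equation that is exactly \cref{eq:LyapunovLQGX} for $\hat\mK$ (solution $X_{\hat\mK}$), a $(1,2)$-block homogeneous Sylvester equation $\hat A_{\mathrm{cl}}X_{12}+X_{12}\Lambda^\tr=0$, and a $(2,2)$ equation $\Lambda X_{22}+X_{22}\Lambda^\tr=0$. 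Hurwitzness of $\hat A_{\mathrm{cl}}$ and $\Lambda$ forces the unique solutions $X_{12}=0$, $X_{22}=0$, so $X_{\tilde\mK}=\diag(X_{\hat\mK},0)$; the identical argument applied to \cref{eq:LyapunovLQGY} gives $Y_{\tilde\mK}=\diag(Y_{\hat\mK},0)$. The cost identity then drops out of \cref{eq:LQG_cost_formulation1}: the trailing zero block contributes nothing, so $J_{q+q'}(\tilde\mK)=\operatorname{tr}(Q_{\mathrm{cl},\tilde\mK}X_{\tilde\mK})=\operatorname{tr}(Q_{\mathrm{cl},\hat\mK}X_{\hat\mK})=J_q(\hat\mK)$.

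For stationarity I would substitute these block-diagonal gramians into the closed-form gradient of the LQG cost expressed through the plant/controller blocks of $X_{\tilde\mK},Y_{\tilde\mK}$ (cf.\ \cite{zheng2021analysis}). Viewing $(\xi_{\hat\mK},\xi_\Lambda)$ jointly as the controller state, each block of $\nabla_{A_\mK}J,\nabla_{B_\mK}J,\nabla_{C_\mK}J$ is a product in which at least one factor is a controller-cross or controller-controller gramian block; every entry touching the $\Lambda$-coordinate multiplies one of the zero blocks and hence vanishes, while the surviving $\hat\mK$-block reproduces verbatim the corresponding gradient of $J_q$ at $\hat\mK$. Since $\nabla J_q(\hat\mK)=0$ by hypothesis, all blocks are zero and $\nabla J_{q+q'}(\tilde\mK)=0$.

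The gradient bookkeeping in the last step is routine; the genuine linchpin is the vanishing of the cross- and $\Lambda$-blocks of $X_{\tilde\mK}$ and $Y_{\tilde\mK}$, since both the cost identity and the cancellation in the gradient rest on these being \emph{exactly} zero. That in turn relies on stability of $\Lambda$, so that the homogeneous Sylvester equation $\hat A_{\mathrm{cl}}X_{12}+X_{12}\Lambda^\tr=0$ admits only the trivial solution — without it the decoupling, and indeed membership in $\mathcal C_{q+q'}$, would fail. I would therefore treat the Sylvester/Lyapunov uniqueness argument as the step to state with care, and leave the explicit block multiplications as mechanical verification.
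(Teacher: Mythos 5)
Your proof is correct, and it is essentially the canonical argument: this paper does not prove the statement at all (it imports it verbatim as \cite[Theorem 4.1]{zheng2021analysis}), and your route --- block-diagonalizing the closed-loop matrix, killing the cross- and $\Lambda$-blocks of $X_{\tilde{\mK}},Y_{\tilde{\mK}}$ via Sylvester/Lyapunov uniqueness (valid since $\hat{A}_{\mathrm{cl}}$ and $\Lambda$ are both Hurwitz, so $\operatorname{eig}(\hat{A}_{\mathrm{cl}})\cap\operatorname{eig}(-\Lambda^{\tr})=\varnothing$), then reading off the cost from the trace formula and the vanishing gradient from the blockwise gradient expressions --- is exactly how the cited reference establishes it. Your identification of the Sylvester uniqueness step as the linchpin, with the block multiplications as mechanical bookkeeping, is also the right assessment of where the mathematical content lies.
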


We are now ready to prove \Cref{theorem:all-stationary-points}.

\textbf{Proof of \Cref{theorem:all-stationary-points}: } Let  $\mK=\begin{bmatrix}
0 & C_{\mK} \\ B_{\mK} & A_{\mK}
\end{bmatrix}\in \mathcal{C}_n$ be a stationary point of LQG~\cref{eq:LQG}. According to \Cref{theorem:kalman}, we can perform a Kalman decomposition using a similarity transformation $T$, i.e.,
$$
\begin{bmatrix}
0 & C_{\mK} \\ B_{\mK} & A_{\mK}
\end{bmatrix} \to \begin{bmatrix}
0 & C_{\mK}T^{-1} \\ TB_{\mK} & TA_{\mK}T^{-1}
\end{bmatrix} = \begin{bmatrix}
0 & \tilde{C}_{\mK} \\ \tilde{B}_{\mK} & \tilde{A}_{\mK}
\end{bmatrix} = \tilde{\mK}
$$
where the system $(\tilde{A}_{\mK},\tilde{B}_{\mK}, \tilde{C}_{\mK},0)$ is in the standard Kalman decomposition of the form~\eqref{eq:kalman-decomposition}. By \Cref{lemma:gradient_simi_tran_linear}, we further have 
$$
\left.\nabla J_n\right|_{\tilde{\mK}} = \begin{bmatrix}
I_m & 0 \\
0 & T^{-\tr}
\end{bmatrix}
\cdot \left.\nabla J_q\right|_{\mK}
\cdot \begin{bmatrix}
I_p & 0 \\
0 & T^{\tr}
\end{bmatrix} = 0.
$$
Therefore, the new point $\hat{\mK}$ in the Kalman decomposition remains to be a stationary point of $J_n$ in $\mathcal{C}_n$. This is expected since $\tilde{\mK}$ and ${\mK}$ corresponds to the same transfer function. 

Denote the controllable and observable components in $\tilde{\mK} = \begin{bmatrix}
0 & \tilde{C}_{\mK} \\ \tilde{B}_{\mK} & \tilde{A}_{\mK}
\end{bmatrix} \in \mathcal{C}_n$ as  $\tilde{\mK}_{\mathrm{co}} =  \begin{bmatrix}
0 & \tilde{C}_{\mK,\mathrm{co}} \\ \tilde{B}_{\mK,\mathrm{co}} & \tilde{A}_{\mK,\mathrm{co}}
\end{bmatrix} \in \mathcal{C}_q$, where $q$ is the dimension of controllable and observable states (i.e., the dimension of its minimal realization). Since $\tilde{\mK}$ is a stationary point of $J_n(\mK)$ over $\mathcal{C}_n$, the directional derivatives of $J_n(\mK)$ at $\tilde{\mK}_{\mathrm{co}}$ are all zero. Therefore, it is not difficult to see that $\tilde{\mK}_{\mathrm{co}}$ is a stationary point of $J_q(\mK)$ over  $\mathcal{C}_q$. 

Now, given an arbitrary minimal realization of $\mK$ as $\hat{\mK}=\begin{bmatrix}
0 & \hat{C}_{\mK} \\ \hat{B}_{\mK} & \hat{A}_{\mK}
\end{bmatrix}\in \mathcal{C}_q$, there exists a unique similarity transformation $T_q \in \mathbb{R}^{q \times q}$~\cite[Theorem 3.17]{zhou1996robust}  such that
$$
    \begin{bmatrix}
0 & \hat{C}_{\mK} \\ \hat{B}_{\mK} & \hat{A}_{\mK}
\end{bmatrix} = \begin{bmatrix}
0 & \tilde{C}_{\mK,\mathrm{co}}T_q^{-1} \\ T_q\tilde{B}_{\mK,\mathrm{co}}& T_q\tilde{A}_{\mK,\mathrm{co}}T_q^{-1}
\end{bmatrix}.  
$$
 \Cref{lemma:gradient_simi_tran_linear} ensures that this minimal realization $\hat{\mK} \in \mathcal{C}_q$ is a stationary point of $J_q(\mK)$ since $\tilde{\mK}_{\mathrm{co}}$ is stationary. Applying~\Cref{theorem:high-order-stationary-point} with $q' = n - q$ leads to the desired result in~\cref{eq:stationary_nonglobally_K-augmented}. 
 
 The statement that $\mK$ is globally optimal when $q = n$ has been shown in~\cite[Theorem 4.3]{zheng2021analysis} where the authors have shown that all minimal full-order stationary points are in the form~\cref{eq:optimal-ARE} up to a similarity transformation. This completes the proof.  \hfill $\square$

%remains a stationary point for $J_q$ in the space of $\mathcal{C}_q$, where $q$ is the dimension of controllable and observable state (i.e., the dimension of its minimal realization). 

\subsection{Proof of \Cref{theorem:non_globally_optimal_stationary_point}}

Before presenting the proof, we recall a few definitions. We define a linear space 
\begin{equation}\label{eq:def_Vq}
\mathcal{V}_q\coloneqq \left\{
\left.\begin{bmatrix}
    D_{\mK} & C_{\mK} \\
    B_{\mK} & A_{\mK}
    \end{bmatrix}
    \in \mathbb{R}^{(m+q) \times (p+q)} \right|\;  D_{\mK} = 0_{m\times p}\right\}.
\end{equation}
Let $\mK$ be any controller in $\mathcal{C}_n \subset \mathcal{V}_n$, and we use $\operatorname{Hess}_{\,\mK}:\mathcal{V}_n\times\mathcal{V}_n\rightarrow\mathbb{R}$ to denote the bilinear form of the Hessian of $J_n$ at $\mK$, so that for any $\Delta\in\mathcal{V}_n$, we have 
\begin{equation}
J_n(\mK+\Delta)
=J_n(\mK)+
\operatorname{tr}
\left(
\nabla J_q(\mK)^\tr \Delta\right)
+
\frac{1}{2}
\operatorname{Hess}_{\,\mK}(\Delta,\Delta)
+o(\|\Delta\|_F^2)
\end{equation}
as $\|\Delta\|_F\rightarrow 0$. Obviously, $\operatorname{Hess}_{\,\mK}$ is symmetric in the sense that $\operatorname{Hess}_{\,\mK}(x,y)=\operatorname{Hess}_{\,\mK}(y,x)$ for all $x,y\in\mathcal{V}_n$. 

We recall~\cite[Lemma 4.3]{zheng2021analysis} that presents the hessian calculations of the LQG problem~\cref{eq:LQG} by solving three Lyapunov equations.
\begin{lemma}[{\!\!\cite[Lemma 4.3]{zheng2021analysis}}]\label{lemma:Jn_Hessian}
Fix $q\geq 1$ such that $\mathcal{C}_q\neq\varnothing$.
Let $\mK=\begin{bmatrix}
0 & C_{\mK} \\
B_{\mK} & A_{\mK}
\end{bmatrix} \in \mathcal{C}_q$. Then for any $\Delta=\begin{bmatrix}
0 & \Delta_{C_\mK} \\
\Delta_{B_\mK} & \Delta_{A_\mK}
\end{bmatrix}\in\mathcal{V}_q$, we have
$$
\begin{aligned}
\operatorname{Hess}_{\,\mK}(\Delta,\Delta)
=\ &
2\operatorname{tr}
\Bigg(
2
\begin{bmatrix}
0 & B\Delta_{C_\mK} \\
\Delta_{B_\mK} C & \Delta_{A_\mK}
\end{bmatrix}
X'_{\mK,\Delta}\cdot Y_{\mK}
+2\begin{bmatrix}
0 & 0 \\ 0 & {C_{\mK} }^\tr R \Delta_{C_\mK}
\end{bmatrix}\cdot X'_{\mK,\Delta}
\\
& \qquad\qquad
+\begin{bmatrix}
0 & 0 \\
0 & \Delta_{B_\mK}V\Delta_{B_\mK}^\tr
\end{bmatrix} Y_{\mK}
+
\begin{bmatrix}
0 & 0 \\ 0 & \Delta_{C_\mK}^\tr R \Delta_{C_\mK}
\end{bmatrix}X_{\mK}
\Bigg),
\end{aligned}
$$
where $X_{\mK}$ and $Y_{\mK}$ are the solutions to the Lyapunov equations~\eqref{eq:LyapunovLQGX} and~\eqref{eq:LyapunovLQGY}, and $X'_{\mK,\Delta}\in\mathbb{R}^{(n+q)\times(n+q)}$ is the solution to the following Lyapunov equation
\begin{equation} \label{eq:Lyapunov_hessian}
    \begin{bmatrix}
A & BC_{\mK} \\
B_{\mK} C & A_{\mK} 
\end{bmatrix} X'_{\mK,\Delta}
+X'_{\mK,\Delta}
\begin{bmatrix}
A & BC_{\mK} \\
B_{\mK} C & A_{\mK} 
\end{bmatrix}^\tr
+M_1(X_{\mK},\Delta) = 0,
\end{equation}
with
\begin{align*}
M_1(X_{\mK},\Delta)
\coloneqq
\begin{bmatrix}
0 & B\Delta_{C_\mK} \\
\Delta_{B_\mK} C & \Delta_{A_\mK}
\end{bmatrix} X_{\mK}
+ X_{\mK}\begin{bmatrix}
0 & B\Delta_{C_\mK} \\
\Delta_{B_\mK} C & \Delta_{A_\mK}
\end{bmatrix}^\tr 
\!+
\begin{bmatrix}
0 & 0 \\
0 & B_{\mK} V\Delta_{B_\mK}^\tr
\!+\! 
\Delta_{B_\mK} V {B_{\mK} }^\tr
\end{bmatrix}.
\end{align*}
\end{lemma}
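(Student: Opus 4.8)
The plan is to obtain the Hessian as the quadratic coefficient in the Taylor expansion of the Lyapunov-based cost $J_q(\mK)=\operatorname{tr}(Q_{\mathrm{cl},\mK}X_\mK)$ along a perturbation $\mK\mapsto\mK+\Delta$, using the adjoint variable $Y_\mK$ to eliminate the second-order correction of $X_\mK$. First I would record how the closed-loop data perturb. Since the map $\mK\mapsto A_{\mathrm{cl}}$ is affine, the perturbed closed-loop matrix is exactly $A_{\mathrm{cl}}+E$ with $E=\begin{bmatrix}0 & B\Delta_{C_\mK}\\ \Delta_{B_\mK}C & \Delta_{A_\mK}\end{bmatrix}$ linear in $\Delta$ and no quadratic part. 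By contrast, the cost matrices are quadratic in $\mK$, so I would split $Q_{\mathrm{cl},\mK+\Delta}=Q_{\mathrm{cl},\mK}+\delta Q^{(1)}+\delta Q^{(2)}$ and $W_{\mathrm{cl},\mK+\Delta}=W_{\mathrm{cl},\mK}+\delta W^{(1)}+\delta W^{(2)}$, where the first-order pieces collect the cross terms (the $(2,2)$ block of $\delta Q^{(1)}$ being $C_\mK^\tr R\Delta_{C_\mK}+\Delta_{C_\mK}^\tr RC_\mK$) and the second-order pieces collect the pure-$\Delta$ terms $\Delta_{C_\mK}^\tr R\Delta_{C_\mK}$ and $\Delta_{B_\mK}V\Delta_{B_\mK}^\tr$.

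Next I would expand the solution of~\eqref{eq:LyapunovLQGX}. Because $\mathcal{C}_q$ is open and $A_{\mathrm{cl}}$ is Hurwitz at $\mK$, the Lyapunov operator remains invertible for small $\Delta$, so $X_{\mK+\Delta}=X_\mK+X'+X''+o(\|\Delta\|_F^2)$ is a valid expansion. Matching orders in $(A_{\mathrm{cl}}+E)X_{\mK+\Delta}+X_{\mK+\Delta}(A_{\mathrm{cl}}+E)^\tr+W_{\mathrm{cl},\mK+\Delta}=0$, the zeroth order recovers~\eqref{eq:LyapunovLQGX}, and the first order gives $A_{\mathrm{cl}}X'+X'A_{\mathrm{cl}}^\tr+\big(EX_\mK+X_\mK E^\tr+\delta W^{(1)}\big)=0$. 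The key observation is that $EX_\mK+X_\mK E^\tr+\delta W^{(1)}$ is exactly $M_1(X_\mK,\Delta)$, so $X'=X'_{\mK,\Delta}$ solves precisely~\eqref{eq:Lyapunov_hessian}. The second order gives $A_{\mathrm{cl}}X''+X''A_{\mathrm{cl}}^\tr+\big(EX'+X'E^\tr+\delta W^{(2)}\big)=0$.

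Then I would substitute into $J_q=\operatorname{tr}(Q_{\mathrm{cl}}X)$ and collect the quadratic terms; the terms linear in $\Delta$ assemble the gradient and do not contribute to the quadratic form, leaving $\tfrac12\operatorname{Hess}_{\,\mK}(\Delta,\Delta)=\operatorname{tr}(Q_{\mathrm{cl},\mK}X'')+\operatorname{tr}(\delta Q^{(1)}X')+\operatorname{tr}(\delta Q^{(2)}X_\mK)$. The only term carrying the inconvenient second-order correction $X''$ is $\operatorname{tr}(Q_{\mathrm{cl},\mK}X'')$, and the central trick is to remove it via the adjoint equation~\eqref{eq:LyapunovLQGY}: substituting $Q_{\mathrm{cl},\mK}=-(A_{\mathrm{cl}}^\tr Y_\mK+Y_\mK A_{\mathrm{cl}})$ and applying the cyclic property of the trace together with the second-order Lyapunov equation yields $\operatorname{tr}(Q_{\mathrm{cl},\mK}X'')=\operatorname{tr}\big(Y_\mK(EX'+X'E^\tr+\delta W^{(2)})\big)$, now expressed purely through the already-known quantities $X_\mK$, $X'_{\mK,\Delta}$, and $Y_\mK$.

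Finally I would consolidate using symmetry of $X_\mK$, $X'_{\mK,\Delta}$, and $Y_\mK$: each quantity that appears together with its transpose—namely the two summands of $\delta Q^{(1)}$, and $EX'$ together with $X'E^\tr$—combines into a single trace with a factor of two, which is where the coefficients $2$ in the stated formula originate. Matching the four resulting traces against the four matrix terms completes the derivation. I expect the main obstacle to be the bookkeeping in the second and third steps—correctly separating the first-order from second-order contributions of $Q_{\mathrm{cl}}$, $W_{\mathrm{cl}}$, and $X_{\mK+\Delta}$, and ensuring that every occurrence of $X''$ is eliminated by the adjoint identity—rather than any single hard idea, since the perturbation-plus-adjoint machinery itself is standard for Lyapunov-based cost functionals.
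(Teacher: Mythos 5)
Your proposal is correct, but note that this lemma has no proof in the paper at all: it is imported verbatim by citation from \cite[Lemma 4.3]{zheng2021analysis}, so there is nothing in-paper to compare against. Your blind derivation is the standard (and almost certainly the intended) perturbation-plus-adjoint argument, and it checks out in every detail: the forcing term $E X_{\mK} + X_{\mK}E^\tr + \delta W^{(1)}$ of the first-order Lyapunov equation is exactly $M_1(X_{\mK},\Delta)$, so your $X'$ coincides with $X'_{\mK,\Delta}$; the substitution $Q_{\mathrm{cl},\mK} = -(A_{\mathrm{cl}}^\tr Y_{\mK} + Y_{\mK}A_{\mathrm{cl}})$ together with the second-order Lyapunov equation correctly converts $\operatorname{tr}(Q_{\mathrm{cl},\mK}X'')$ into $\operatorname{tr}\bigl(Y_{\mK}(EX' + X'E^\tr + \delta W^{(2)})\bigr)$; and the symmetry of $X_{\mK}$, $X'_{\mK,\Delta}$, $Y_{\mK}$ merges the transposed pairs into the two factors of $2$ inside the trace, while the overall factor $2$ comes from the paper's normalization $J_q(\mK+\Delta) = J_q(\mK) + \operatorname{tr}(\nabla J_q(\mK)^\tr\Delta) + \tfrac12\operatorname{Hess}_{\,\mK}(\Delta,\Delta) + o(\|\Delta\|_F^2)$. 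The only point worth stating slightly more explicitly in a polished write-up is why the expansion $X_{\mK+\Delta} = X_{\mK} + X' + X'' + o(\|\Delta\|_F^2)$ is legitimate, namely that $A_{\mathrm{cl}} + E$ remains Hurwitz for small $\Delta$ and the Lyapunov solution depends analytically on the coefficients, which is consistent with the real-analyticity of $J_q$ quoted in \Cref{lemma:LQG_cost_formulation1}.
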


We are now ready to prove the main technical result in \Cref{theorem:non_globally_optimal_stationary_point}. Our proof is motivated by that of~\cite[Theorem 4.2]{zheng2021analysis} with more complicated and careful computation. The main idea is to exploit the bilinear property of the Hessian and the non-controllable/non-observable property to identify a two-by-two hessian block
\begin{equation*} %\label{eq:indefiniteblock}
      \begin{bmatrix}
    \operatorname{Hess}_{\,\tilde{\mK}}(\Delta^{(1)},\Delta^{(1)}) & \operatorname{Hess}_{\,\tilde{\mK}}(\Delta^{(1)},\Delta^{(2)})\\
    \operatorname{Hess}_{\,\tilde{\mK}}(\Delta^{(1)},\Delta^{(2)}) & \operatorname{Hess}_{\,\tilde{\mK}}(\Delta^{(2)},\Delta^{(2)})
    \end{bmatrix} \in \mathbb{S}^2
\end{equation*}
in which the diagonal entries are always zero. Using the Hessian calculation in \Cref{lemma:Jn_Hessian}, we then prove that if $\operatorname{eig}(-\Lambda) \nsubseteq \mathcal{Z}$, then the off-diagonal entries are non-zero. The Hessian of $J_n(\mK)$ at $\tilde{\mK}$ is thus indefinite. 

\textbf{Proof of \Cref{theorem:non_globally_optimal_stationary_point}}: Consider a direction as 
$$
\Delta
    =\left[\begin{array}{c:cc}
    0 & 0 &  \Delta_C \\[2pt]
    \hdashline
    0 & 0 & 0 \\[-2pt]
    \Delta_B & 0 & \Delta_A
    \end{array}\right] \in \mathcal{V}_{n}, \quad \text{with} \; \Delta_A \in \mathbb{R}^{(n-q) \times (n-q)}, \Delta_B \in \mathbb{R}^{(n-q) \times p}, \Delta_C \in \mathbb{R}^{m\times (n-q) }.  
$$
The corresponding controller $\tilde{\mK} + \Delta$ in frequency domain is 
$$
\begin{aligned}
      \begin{bmatrix}
     \hat{C}_{\mK} & \Delta_C
    \end{bmatrix}\left(sI - \begin{bmatrix}\hat{A}_{\mK} & 0 \\ 0 & \Lambda + \Delta_A \end{bmatrix}\right)^{-1}\begin{bmatrix}
     \hat{B}_{\mK} \\ \Delta_D
    \end{bmatrix} 
    = \;& \begin{bmatrix}
     \hat{C}_{\mK} & \Delta_C
    \end{bmatrix}\begin{bmatrix}sI - \hat{A}_{\mK} & 0 \\ 0 & sI - \Lambda - \Delta_A  \end{bmatrix}^{-1}\begin{bmatrix}
     \hat{B}_{\mK} \\ \Delta_D
    \end{bmatrix} \\
    =\;&\hat{C}_{\mK}(sI - \hat{A}_{\mK})^{-1}\hat{B}_{\mK} + \Delta_C(sI - \Lambda - \Delta_A)^{-1}\Delta_B.
\end{aligned}
$$
We let 
$$
    \Delta^{(1)} = \left[\begin{array}{c:cc}
    0 & 0 &  \Delta_C \\[2pt]
    \hdashline
    0 & 0 & 0 \\[-2pt]
    0 & 0 & 0
    \end{array}\right], \Delta^{(2)} = \left[\begin{array}{c:cc}
    0 & 0 &  0 \\[2pt]
    \hdashline
    0 & 0 & 0 \\[-2pt]
    \Delta_B & 0 & 0
    \end{array}\right],\Delta^{(3)} = \left[\begin{array}{c:cc}
    0 & 0 &  0 \\[2pt]
    \hdashline
    0 & 0 & 0 \\[-2pt]
    0 & 0 & \Delta_A
    \end{array}\right].
$$
Now it is clear that the controllers $\tilde{\mK} + t \Delta^{(i)}, i = 1, 2, 3$ and $\tilde{\mK} + t (\Delta^{(i)} +\Delta^{(3)}), i = 1, 2$ correspond to the same transfer function in the frequency domain. Therefore, for all sufficiently small $t$, we have
$$
\begin{aligned}
J_n( \tilde{\mK}) &= J_n( \tilde{\mK} + t \Delta^{(1)}) = J_n( \tilde{\mK} + t \Delta^{(2)}) = J_n( \tilde{\mK} + t \Delta^{(3)})  \\
&=J_n( \tilde{\mK} + t (\Delta^{(1)}+\Delta^{(3)})) = J_n( \tilde{\mK} + t (\Delta^{(2)}+\Delta^{(3)})). 
\end{aligned}
$$
This implies that 
$$
\operatorname{Hess}_{\,\tilde{\mK}}
(\Delta^{(i)},\Delta^{(i)})=0,\qquad\forall i=1,2,3,
$$
and
$$
\operatorname{Hess}_{\,\tilde{\mK}}
(\Delta^{(1)}+\Delta^{(3)},\Delta^{(1)}+\Delta^{(3)})
=\operatorname{Hess}_{\,\tilde{\mK}}
(\Delta^{(2)}+\Delta^{(3)},\Delta^{(2)}+\Delta^{(3)})=0.
$$
%Therefore

Then, by the bilinearity of the Hessian, we have 
$$
\begin{aligned}
    \operatorname{Hess}_{\,\tilde{\mK}}(\Delta,\Delta) &=  \operatorname{Hess}_{\,\tilde{\mK}}(\Delta^{(1)}+\Delta^{(2)}+\Delta^{(3)},\Delta^{(1)}+\Delta^{(2)}+\Delta^{(3)}) \\
    & = \operatorname{Hess}_{\,\tilde{\mK}}(\Delta^{(1)}+\Delta^{(2)},\Delta^{(1)}+\Delta^{(2)}) + 2\operatorname{Hess}_{\,\tilde{\mK}}(\Delta^{(3)},\Delta^{(1)}+\Delta^{(2)}) + \operatorname{Hess}_{\,\tilde{\mK}}(\Delta^{(3)},\Delta^{(3)}) \\
    &= \operatorname{Hess}_{\,\tilde{\mK}}(\Delta^{(1)}+\Delta^{(2)},\Delta^{(1)}+\Delta^{(2)}). 
\end{aligned}
$$
We also have 
$$
\begin{aligned}
    \operatorname{Hess}_{\,\tilde{\mK}}(\Delta^{(1)},\Delta^{(2)}) &= \frac{1}{2} \operatorname{Hess}_{\,\tilde{\mK}}(\Delta^{(1)}+\Delta^{(2)},\Delta^{(1)}+\Delta^{(2)}) - \operatorname{Hess}_{\,\tilde{\mK}}(\Delta^{(1)},\Delta^{(1)}) - \operatorname{Hess}_{\,\tilde{\mK}}(\Delta^{(2)},\Delta^{(2)})\\
    &= \frac{1}{2} \operatorname{Hess}_{\,\tilde{\mK}}(\Delta^{(1)}+\Delta^{(2)},\Delta^{(1)}+\Delta^{(2)})\\
    &=\frac{1}{2} \operatorname{Hess}_{\,\tilde{\mK}}(\Delta,\Delta).
\end{aligned}
$$
If there exists a direction 
$
\Delta
    =\left[\begin{array}{c:cc}
    0 & 0 &  \Delta_C \\[2pt]
    \hdashline
    0 & 0 & 0 \\[-2pt]
    \Delta_B & 0 & \Delta_A
    \end{array}\right] \in \mathcal{V}_{n},
$
such that $\operatorname{Hess}_{\,\tilde{\mK}}(\Delta,\Delta) \neq 0$, then $\operatorname{Hess}_{\,\tilde{\mK}}$ must be indefinite  since there is a two-by-two block 
\begin{equation} \label{eq:indefiniteblock}
    \begin{bmatrix}
    \operatorname{Hess}_{\,\tilde{\mK}}(\Delta^{(1)},\Delta^{(1)}) & \operatorname{Hess}_{\,\tilde{\mK}}(\Delta^{(1)},\Delta^{(2)})\\
    \operatorname{Hess}_{\,\tilde{\mK}}(\Delta^{(1)},\Delta^{(2)}) & \operatorname{Hess}_{\,\tilde{\mK}}(\Delta^{(2)},\Delta^{(2)})
    \end{bmatrix}
\end{equation}
which has zero diagonal entries and non-zero off-diagonal entries.  

The rest of the proof is show that if $\Lambda$ is symmetric and $\operatorname{eig}(-\Lambda) \nsubseteq \mathcal{Z}$ with $\mathcal{Z}$ defined in \Cref{theorem:non_globally_optimal_stationary_point}, we can indeed find a direction $\Delta$ such that $\operatorname{Hess}_{\,\tilde{\mK}}(\Delta,\Delta) \neq 0$. 

\noindent \textbf{{Computation of $\operatorname{Hess}_{\,\tilde{\mK}}(\Delta,\Delta)$}:} Consider the controller $ \tilde{\mK}$ and a direction $\hat{\Delta}$ as 
$$
\begin{aligned}
   \tilde{\mK}
    &=\left[\begin{array}{c:cc}
    0 & \hat{C}_{\mK} &  0 \\[2pt]
    \hdashline
    \hat{B}_{\mK} & \hat{A}_{\mK} & 0 \\[-2pt]
    0 & 0 & \Lambda
    \end{array}\right] \in \mathcal{C}_{n}, \qquad 
    \hat{\Delta}
    &=\left[\begin{array}{c:cc}
    0 & 0 &  \Delta_C \\[2pt]
    \hdashline
    0 & 0 & 0 \\[-2pt]
    \Delta_B & 0 & 0
    \end{array}\right] \in \mathcal{V}_{n}.
\end{aligned}
$$
where $\Lambda$ is an $(n-q) \times (n-q)$ stable symmetric matrix. %diagonal matrix with strictly negative diagonal entries. 

We let $X_{\tilde{\mK}}$ and $Y_{\tilde{\mK}}$ be the unique positive semidefinite solutions to the Lyapunov equations
\begin{subequations}
\begin{align}
\begin{bmatrix} A &  BC_{\tilde{\mK}} \\ B_{\tilde{\mK}} C & A_{\tilde{\mK}} \end{bmatrix}X_{\tilde{\mK}} + X_{\tilde{\mK}}\begin{bmatrix} A &  BC_{\tilde{\mK}} \\ B_{\tilde{\mK}} C & A_{\tilde{\mK}} \end{bmatrix}^\tr +  \begin{bmatrix} W & 0 \\ 0 & B_{\tilde{\mK}}VB_{\tilde{\mK}}^\tr  \end{bmatrix}
& = 0, \label{eq:LyapunovX}
\\
\begin{bmatrix} A &  BC_{\tilde{\mK}} \\ B_{\tilde{\mK}} C & A_{\tilde{\mK}} \end{bmatrix}^\tr Y_{\tilde{\mK}} +  Y_{\tilde{\mK}}\begin{bmatrix} A &  BC_{\tilde{\mK}} \\ B_{\tilde{\mK}} C & A_{\tilde{\mK}} \end{bmatrix} +   \begin{bmatrix} Q & 0 \\ 0 & C_{\tilde{\mK}}^\tr R C_{\tilde{\mK}} \end{bmatrix}
& = 0. \label{eq:LyapunovY}
\end{align}
\end{subequations}
Note that~\cref{eq:LyapunovX} reads as
\begin{equation} \label{eq:LyapunovX-partition}
\begin{bmatrix} A &  B\hat{C}_{\mK} &  0 \\  \hat{B}_{\mK} C & \hat{A}_{\mK} & 0 \\ 0 & 0 & \Lambda \end{bmatrix}X_{\tilde{\mK}} + X_{\tilde{\mK}}\begin{bmatrix} A &  B\hat{C}_{\mK} &  0 \\  \hat{B}_{\mK} C & \hat{A}_{\mK} & 0 \\ 0 & 0 & \Lambda \end{bmatrix}^\tr +  \begin{bmatrix} W & 0 & 0\\ 0 & \hat{B}_{\mK} V(\hat{B}_{\mK})^\tr & 0 \\ 0 & 0 & 0  \end{bmatrix}
 = 0.
\end{equation}
Then, the solutions $X_{\tilde{\mK}}$ and $Y_{\tilde{\mK}}$ to the Lyapunov equations~\cref{eq:LyapunovX} and~\cref{eq:LyapunovY} are in the form of 
$$
\begin{aligned}
    X_{\tilde{\mK}} &= \begin{bmatrix} X_{\operatorname{op}} & 0 \\ 0 & 0 \end{bmatrix} \in \mathbb{S}_+^{2n}, \qquad  \\
    Y_{\tilde{\mK}} &= \begin{bmatrix} Y_{\operatorname{op}} & 0 \\ 0 & 0 \end{bmatrix}  \in \mathbb{S}_+^{2n}
\end{aligned}
$$
where $X_{\operatorname{op}} \in \mathbb{S}^{n+q}_{+}$ is the unique positive semidefinite solution to the left-upper Lyapunov equation in~\cref{eq:LyapunovX-partition}, i.e.,
\begin{equation*} %\label{eq:LyapunovX-partition}
\begin{bmatrix} A &  B\hat{C}_{\mK}  \\  \hat{B}_{\mK} C & \hat{A}_{\mK} \end{bmatrix}X_{\operatorname{op}} + X_{\operatorname{op}}\begin{bmatrix} A &  B\hat{C}_{\mK}  \\  \hat{B}_{\mK} C & \hat{A}_{\mK} \end{bmatrix}^\tr +  \begin{bmatrix} W & 0 \\ 0 & \hat{B}_{\mK} V(\hat{B}_{\mK})^\tr   \end{bmatrix}
 = 0,
\end{equation*}
which is the same as~\cref{eq:LyapunovLQGX}, and $Y_{\operatorname{op}} \in \mathbb{S}^{n+q}_{+}$ is the unique positive semidefinite solution to~\cref{eq:LyapunovLQGY}. 

We next use \Cref{lemma:Jn_Hessian} to compute the hessian $\operatorname{Hess}_{\,\tilde{\mK}}(\hat{\Delta},\hat{\Delta})$. For this computation, we note that 
$$
\begin{aligned}
    \Delta_{B_\mK} &= \begin{bmatrix}
    0_{q \times p} \\   \Delta_{B}
    \end{bmatrix} \in \mathbb{R}^{n \times p}, \; \\
    \Delta_{C_\mK} &= \begin{bmatrix}
    0_{m \times q}  &  \Delta_{C}
    \end{bmatrix} \in \mathbb{R}^{m \times n}
\end{aligned}
$$
and that 
$$
\begin{aligned}
    \begin{bmatrix}
0 & 0 \\
0 & \Delta_{B_\mK}V\Delta_{B_\mK}^\tr
\end{bmatrix} Y_{\tilde{\mK}} = 
\left[\begin{array}{cc:c}
0 & 0 & 0\\
0 & 0 & 0 \\[2pt] 
\hdashline
0 & 0 & \Delta_{B}V\Delta_{B}^\tr
    \end{array}\right] 
\begin{bmatrix} Y_{\operatorname{op}} & 0 \\ 0 & 0 \end{bmatrix} &\equiv 0 \\
\begin{bmatrix}
0 & 0 \\ 0 & \Delta_{C_\mK}^\tr R \Delta_{C_\mK}
\end{bmatrix}\tilde{X}_{\mK} = \begin{bmatrix}
0 & 0 \\ 0 & \Delta_{C_\mK}^\tr R \Delta_{C_\mK}
\end{bmatrix}\begin{bmatrix} X_{\operatorname{op}} & 0 \\ 0 & 0 \end{bmatrix} &\equiv0.
\end{aligned}
$$
Therefore, by \Cref{lemma:Jn_Hessian}, we can see that 
\begin{equation} \label{eq:Hessian}
\operatorname{Hess}_{\, \tilde{\mK}}(\hat{\Delta},\hat{\Delta})
=
4\operatorname{tr}
\!\left(
\begin{bmatrix}
0 & B\Delta_{C_\mK} \\
\Delta_{B_\mK}C & 0
\end{bmatrix}
X'_{ \tilde{\mK},\hat{\Delta}}
\begin{bmatrix}
Y_{\mathrm{op}} & 0 \\ 0 & 0
\end{bmatrix}
\right) + 4\operatorname{tr}\!\left(\begin{bmatrix}
0 & 0 \\
0 & C_{ \tilde{\mK}}^\tr R \Delta_{C_\mK} 
\end{bmatrix}
X'_{ \tilde{\mK},\hat{\Delta}}\right),
\end{equation}
where $X'_{ \tilde{\mK},\hat{\Delta}}$ is the solution to the following Lyapunov equation
\begin{equation} \label{eq:hessianX-1}
\begin{aligned}
& \begin{bmatrix} A &  B\hat{C}_{\mK} &  0 \\  \hat{B}_{\mK} C & \hat{A}_{\mK}& 0 \\ 0 & 0 & \Lambda \end{bmatrix}X'_{ \tilde{\mK},\hat{\Delta}}
+
X'_{ \tilde{\mK},\Delta}
\begin{bmatrix} A &  B\hat{C}_{\mK} &  0 \\  \hat{B}_{\mK} C & \hat{A}_{\mK}& 0 \\ 0 & 0 & \Lambda \end{bmatrix}^\tr + M_1(X_{ \tilde{\mK}},\hat{\Delta})
=0,
\end{aligned}
\end{equation}
In~\cref{eq:hessianX-1}, the matrix $M_1(X_{ \tilde{\mK}},\hat{\Delta})$ is defined as 
$$
\begin{aligned}
&M_1(X_{ \tilde{\mK}},\hat{\Delta}) \\
=& 
\left[\begin{array}{cc:c}
0 & 0 & B\Delta_{C} \\
0 & 0 & 0 \\
\hdashline
\Delta_{B}C & 0 & 0
    \end{array}\right] \!
% \begin{bmatrix}
% 0 & 0 & B\Delta_{C} \\
% 0 & 0 & 0 \\
% \Delta_{B}C & 0 & 0
% \end{bmatrix}
\begin{bmatrix}
X_{\mathrm{op}} & 0 \\
0 & 0
\end{bmatrix}
+
\begin{bmatrix}
X_{\mathrm{op}} & 0 \\
0 & 0
\end{bmatrix}
\left[\begin{array}{cc:c}
0 & 0 & B\Delta_{C} \\
0 & 0 & 0 \\
\hdashline
\Delta_{B}C & 0 & 0
    \end{array}\right] ^\tr \!\!+ 
\left[\begin{array}{cc:c}
0 & 0 & 0 \\
 0 & 0 & \hat{B}_{\mK} V\Delta_B^\tr \\
 \hdashline
0 &  \Delta_B V (\hat{B}_{\mK})^\tr & 0
    \end{array}\right] \\
    % \begin{bmatrix} 0 & 0 & 0 \\
    %                                 0 & 0 & B_{\mK}V\Delta_B^\tr \\
    %                              0 &  \Delta_B V B_{\mK}^\tr & 0\end{bmatrix} \\
                                 =& \begin{bmatrix}
                                  0 & X_{\mathrm{op}}(\Delta_{B}\bar{C})^\tr \\
                                  \Delta_{B}\bar{C}X_{\mathrm{op}} & 0
                                 \end{bmatrix} + \begin{bmatrix}
                                 0 &  \bar{B}_{\mK}V \Delta_{B}^\tr\\ \Delta_{B}V\bar{B}_{\mK}^\tr  &   0
                                 \end{bmatrix} \\
                                 =& \begin{bmatrix}
                                 0_{(n+q)\times(n+q)} &  (X_{\mathrm{op}}\bar{C}^\tr +  \bar{B}_{\mK}V) \Delta_{B}^\tr\\
                                 \Delta_{B}(\bar{C}X_{\mathrm{op}} +  V\bar{B}_{\mK}^\tr)  & 0_{(n-q) \times (n-q)}
                                 \end{bmatrix} \in \mathbb{S}^{2n},
\end{aligned}                               
$$
where we have used the definition in \cref{eq:augmented-matrices}, i.e., 
$$
\bar{C} = \begin{bmatrix}
C & 0
\end{bmatrix} \in \mathbb{R}^{p \times (n+q)},\;\bar{B} = \begin{bmatrix}
B \\ 0
\end{bmatrix}  \in \mathbb{R}^{(n+q) \times m}, \;\bar{B}_\mK = \begin{bmatrix}
0 \\ \hat{B}_{\mK}
\end{bmatrix}  \in \mathbb{R}^{(n+q) \times p}.
$$

For notational convenience, we define 
$$
A_{\mathrm{cl}} = \begin{bmatrix} A &  B\hat{C}_{\mK} \\  \hat{B}_{\mK} C & \hat{A}_{\mK}\end{bmatrix} \in \mathbb{R}^{(n+q) \times (n+q)}.
$$
Then, the solution to the Lyapunov equation~\cref{eq:hessianX-1} becomes
\begin{equation}
\begin{aligned}
    X'_{ \tilde{\mK},\Delta} &= \int_0^\infty \left(\begin{bmatrix}
    e^{A_{\mathrm{cl}}t} & 0 \\ 0 &  e^{\Lambda t} 
    \end{bmatrix} \begin{bmatrix}
                                 0_{(n+q)\times(n+q)} &  (X_{\mathrm{op}}\bar{C}^\tr +  \bar{B}_{\mK}V) \Delta_{B}^\tr\\
                                 \Delta_{B}(\bar{C}X_{\mathrm{op}} +  V\bar{B}_{\mK}^\tr)  & 0_{(n-q) \times (n-q)}
                                 \end{bmatrix}\begin{bmatrix}
    e^{A_{\mathrm{cl}}t} & 0 \\ 0 &  e^{\Lambda t} 
    \end{bmatrix}^\tr \right)dt \\
    &=\int_0^\infty \begin{bmatrix}
                     0_{(n+q)\times(n+q)} &  e^{A_{\mathrm{cl}} t}(X_{\mathrm{op}}\bar{C}^\tr +  \bar{B}_{\mK}V) \Delta_{B}^\tr e^{\Lambda^\tr t}\\
                    e^{\Lambda t} \Delta_{B}(\bar{C}X_{\mathrm{op}}  +  V\bar{B}_{\mK}^\tr) e^{A_{\mathrm{cl}}^\tr t} & 0_{(n-q) \times (n-q)}
                                 \end{bmatrix} dt. 
\end{aligned}
\end{equation}
Now we have
\begin{equation}  \label{eq:Hessian_part1}
    \begin{aligned}
        &\operatorname{tr}
\!\left(
\begin{bmatrix}
0_{(n+q)\times(n+q)} & \bar{B}\Delta_{C} \\
\Delta_{B}\bar{C} & 0
\end{bmatrix}
X'_{ \tilde{\mK},\hat{\Delta}}
\begin{bmatrix}
Y_{\mathrm{op}} & 0 \\ 0 & 0
\end{bmatrix}
\right) \\
=&\int_0^\infty  \operatorname{tr}
\!\left(\begin{bmatrix}
0& \bar{B}\Delta_{C} \\
\Delta_{B}\bar{C} & 0
\end{bmatrix} 
\begin{bmatrix}
0 &  e^{A_{\mathrm{cl}} t}(X_{\mathrm{op}}\bar{C}^\tr +  \bar{B}_{\mK}V) \Delta_{B}^\tr e^{\Lambda^\tr t}\\
e^{\Lambda t} \Delta_{B}(\bar{C}X_{\mathrm{op}}  +  V\bar{B}_{\mK}^\tr) e^{A_{\mathrm{cl}}^\tr t} & 0
\end{bmatrix}
\begin{bmatrix}
Y_{\mathrm{op}} & 0 \\ 0 & 0
\end{bmatrix}\right)dt \\
=&\int_0^\infty  \operatorname{tr}
\!\left(\bar{B}\Delta_{C}e^{\Lambda t} \Delta_{B}(\bar{C}X_{\mathrm{op}}  +  V\bar{B}_{\mK}^\tr) e^{A_{\mathrm{cl}}^\tr t} Y_{\mathrm{op}}  \right) %{\color{red} + \operatorname{tr} \!\left(\Delta_{B}\hat{C} e^{A_{\mathrm{cl}} t}(X_{\mathrm{op}}\hat{C}^\tr +  \hat{B}_{\mK}V) \Delta_{B}^\tr e^{\Lambda^\tr t} \right)} 
dt. 
    \end{aligned}
\end{equation}

For the second part in~\cref{eq:Hessian}, we have 
$$
\begin{bmatrix}
0_{n\times n} & 0 \\
0 & C_{ \tilde{\mK}}^\tr R \Delta_{C_\mK} 
\end{bmatrix} = \begin{bmatrix}
0_{n\times n} & 0 \\
0 & \begin{bmatrix} \hat{C}_{\mK} & 0 \end{bmatrix}^\tr R \begin{bmatrix} 0 & \Delta_{C}  \end{bmatrix} 
\end{bmatrix} = \left[\begin{array}{cc:c}
0 & 0 & 0 \\
0 & 0 & (\hat{C}_{\mK})^\tr R \Delta_{C} \\
\hdashline
0 & 0 & 0
    \end{array}\right].
$$

Then, we have
\begin{equation} \label{eq:Hessian_part2}
\begin{aligned}
    &\operatorname{tr}\!\left(\left[\begin{array}{cc:c}
0 & 0 & 0 \\
0 & 0 & (\hat{C}_{\mK})^\tr R \Delta_{C} \\
\hdashline
0 & 0 & 0
    \end{array}\right]
X'_{ \tilde{\mK},\hat{\Delta}}\right) \\
=& \int_0^\infty \operatorname{tr}\!\left(\begin{bmatrix}
0_{(n+q)\times(n+q)} & \bar{C}_{\mK}^\tr {R} \Delta_{C}  \\
0 & 0
\end{bmatrix}\begin{bmatrix}
                     0_{(n+q)\times(n+q)} &  e^{A_{\mathrm{cl}} t}(X_{\mathrm{op}}\bar{C}^\tr +  \bar{B}_{\mK}V) \Delta_{B}^\tr e^{\Lambda^\tr t}\\
                   \star & 0_{(n-q) \times (n-q)}
                                 \end{bmatrix} \right)dt \\
                                 =& \int_0^\infty \operatorname{tr}\!\left( \bar{C}_{\mK}^\tr {R} \Delta_{C}e^{\Lambda t} \Delta_{B}(\bar{C}X_{\mathrm{op}}  +  V\bar{B}_{\mK}^\tr) e^{A_{\mathrm{cl}}^\tr t}\right)dt,
                                 %\\
                                %& {\color{red} \begin{bmatrix}0\\(C_K^*)^\tr R \Delta_C\end{bmatrix}\left(e^{A_{\mathrm{cl}} t}(X_{\mathrm{op}}\hat{C}^\tr +  \hat{B}_{\mK}V) \Delta_{B}^\tr e^{\Lambda^\tr t}\right)^\tr}
\end{aligned}
\end{equation}
where we have defined 
$$
\bar{C}_{\mK} = \begin{bmatrix}
0 & \hat{C}_{\mK}
\end{bmatrix} \in \mathbb{R}^{m \times (n+q)}
$$

\iffalse
{\color{red}
\begin{align*}
    \begin{bmatrix}
       0_{n\times n} & 0_{n\times q} & 0_{n\times (n-q)}\\
       0_{q\times n} & 0_{q\times q} & ((C_K^*)^\tr R \Delta_C)_{q\times (n-q)}\\
       0_{(n-q)\times n} & 0_{(n-q)\times q} & 0_{(n-q)\times (n-q)}
    \end{bmatrix}
\end{align*}
}
\fi

Substituting~\cref{eq:Hessian_part1} and~\cref{eq:Hessian_part2} into~\cref{eq:Hessian} leads to
\begin{equation}
\begin{aligned}
    \operatorname{Hess}_{\, \tilde{\mK}}(\hat{\Delta},\hat{\Delta})
=&
4\int_0^\infty  \operatorname{tr}
\!\left(\bar{B}\Delta_{C}e^{\Lambda t} \Delta_{B}(\bar{C}X_{\mathrm{op}}  +  V\bar{B}_{\mK}^\tr) e^{A_{\mathrm{cl}}^\tr t} Y_{\mathrm{op}} \right)dt \\
&+4\int_0^\infty \operatorname{tr}\!\left( \bar{C}_{\mK}^\tr  {R} \Delta_{C}e^{\Lambda t} \Delta_{B}(\bar{C}X_{\mathrm{op}}  +  V\bar{B}_{\mK}^\tr) e^{A_{\mathrm{cl}}^\tr t}\right)dt \\
= &4 \int_0^\infty \operatorname{tr}\!\left( \Delta_{C}e^{\Lambda t} \Delta_{B}(\bar{C}X_{\mathrm{op}}  +  V\bar{B}_{\mK}^\tr) e^{A_{\mathrm{cl}}^\tr t} (Y_{\mathrm{op}}\bar{B} +  \bar{C}_{\mK}^\tr R ) \right) dt
\end{aligned} 
\end{equation}

% {\color{blue} The rational function should be 
% $$
%     (\hat{C}X_{\mathrm{op}}  +  V\hat{B}_{\mK}^\tr)(\lambda I - A_{_{\mathrm{cl}}}^\tr )^{-1} (Y_{\mathrm{op}}\hat{B} +  \hat{C}_{\mK}^\tr R ))
% $$
% }

Let $\lambda\in \operatorname{eig}(-\Lambda)\backslash\mathcal{Z}$. Since $\lambda\notin \mathcal{Z}$, there exists some $i,j$ such that
\begin{equation} \label{eq:Glambda}
G(\lambda)
\coloneqq 
{e_i^{(p)}}^\tr (\bar{C}X_{\mathrm{op}}  +  V\bar{B}_{\mK}^\tr)(\lambda I - A_{_{\mathrm{cl}}}^\tr )^{-1} (Y_{\mathrm{op}}\bar{B} +  \bar{C}_{\mK}^\tr R )) e_j^{(m)}
\neq 0,
\end{equation}
where $e_i^{(p)} \in \mathbb{R}^{p}$ is the $i$-th element of the standard basis in $\mathbb{R}^{p}$ and  $e_j^{(m)} \in \mathbb{R}^{m}$  the $j$-th element of the standard basis in $\mathbb{R}^{m}$. 

We consider a symmetric $\Lambda \in \mathbb{S}^{n-q}$, then it can be diagonalized with real eigenvalues
$$
    T \Lambda T^{-1} = \begin{bmatrix}
    -\lambda & 0 \\  0 & *
    \end{bmatrix}  \in \mathbb{S}^{n-q}.
$$
We then choose a special direction 
$$\hat{\Delta}
    =\left[\begin{array}{c:cc}
    0 & 0 &  \Delta_C \\[2pt]
    \hdashline
    0 & 0 & 0 \\[-2pt]
    \Delta_B & 0 & 0
    \end{array}\right] \in \mathcal{V}_{n},
    $$
with 
$$
    \Delta_C = e_j^{(m)}\left(e_1^{(n-q)}\right)^\tr T^{-1}, \quad  \Delta_B = Te_1^{(n-q)}\left(e_i^{(p)}\right)^\tr
$$
For this choice, we have 
$$
    \Delta_{C}e^{\Lambda t} \Delta_{B} = e^{-\lambda t}  e_j^{(m)} \left(e_i^{(p)}\right)^\tr,
$$
leading to 
\begin{equation}
\begin{aligned}
    \operatorname{Hess}_{\, \tilde{\mK}}(\hat{\Delta},\hat{\Delta})
&=
4\int_0^\infty  e_i^\tr (\bar{C}X_{\mathrm{op}}  +  V\bar{B}_{\mK}^\tr)(\lambda I - A_{_{\mathrm{cl}}}^\tr )^{-1} (Y_{\mathrm{op}}\bar{B} +  \bar{C}_{\mK}^\tr R )) e_jdt \\
&=   4 \left(e_i^{(p)}\right)^\tr (\bar{C}X_{\mathrm{op}}  +  V\bar{B}_{\mK}^\tr)(\lambda I - A_{_{\mathrm{cl}}}^\tr )^{-1} (Y_{\mathrm{op}}\bar{B} +  \bar{C}_{\mK}^\tr R ))e_j^{(m)},\\
& = 4G(\lambda),
\end{aligned}
\end{equation}
which is not zero by~\cref{eq:Glambda}. We thus have an indefinite $2$-by-$2$ hessian block in~\cref{eq:indefiniteblock}.  
This completes the proof. \hfill $\square$

\subsection{Hessian Computations in \Cref{example:doyle,example:non-minimal,example:open-loop-stable}}

Here, we present some details about the Hessian calculations in \Cref{example:doyle,example:non-minimal,example:open-loop-stable}. For all the Hessian calculation below, we use the standard basis in $\mathcal{V}_n$. 

\begin{enumerate}
    \item \Cref{example:doyle}: 
The Hessian of $J_2(\mK)$ at the optimal controller 
${\mK}^\star = 
% \left[\begin{array}{c:c}
% 0 & \hat{C}_{\mK} \\[2pt]
%     \hdashline \hat{B}_{\mK} & A_{\mK}^\star
% \end{array}\right] 
= \left[\begin{array}{c:cc}
0 &   -5 & -5 \\[2pt]
    \hdashline 5 & -4 &    1 \\
 5& -10 &-4
\end{array}\right] \in \mathcal{C}_{2}
$
is
$$
   \operatorname{Hess}_{\,{\mK}^\star} = \begin{bmatrix}
    1.1321  & -0.9375  &  0.9375   & 1.0889 &  -0.9056 &  -1.1321  & -1.2916  &  1.0889 \\
   -0.9375  &  0.7871  & -0.7871  & -0.8980 &   0.7571  &  0.9375 &   1.0517 &  -0.8980\\
    0.9375  & -0.7871  &  0.7871  &  0.8980 &  -0.7571  & -0.9375 &  -1.0517 &   0.8980 \\
    1.0889  & -0.8980  &  0.8980  &  1.0488 &  -0.8685  & -1.0889  & -1.2492  &  1.0488 \\
   -0.9056  &  0.7571 &  -0.7571&   -0.8685   & 0.7293  &  0.9056 &   1.0208 &  -0.8685\\
   -1.1321  &  0.9375 &  -0.9375  & -1.0889  &  0.9056   & 1.1321  &  1.2916  & -1.0889\\
   -1.2916  &  1.0517 &  -1.0517  & -1.2492  &  1.0208  &  1.2916  &  1.5084 &  -1.2492 \\
    1.0889 &  -0.8980  &  0.8980  &  1.0488  & -0.8685 &  -1.0889  & -1.2492  &  1.0488
\end{bmatrix} \times 10^5
$$
which is positive semidefinite and has eigenvalues 
$$
\lambda_1 =     8.1111 \times 10^5,  \lambda_2 =   6\,133.9, \lambda_3 =    131.2,  \lambda_4 =     6.36,  \lambda_5 = \cdots = \lambda_8 = 0.
$$
We further compute the matrices in~\cref{eq:augmented-matrices} as follows
 $$\bar{C} = \begin{bmatrix}
1 & 0 &0 & 0
\end{bmatrix}, \;
\bar{B} = \begin{bmatrix}
0\\1 \\ 0\\0
\end{bmatrix},\; \bar{C}_{\mK} = \begin{bmatrix}
0 & 0 &   -5 & -5
\end{bmatrix},\; \bar{B}_\mK = \begin{bmatrix}
0 \\ 0\\ 5 \\ 5
\end{bmatrix}, \; A_{\mathrm{cl}} \!=\! \begin{bmatrix}
    1  &  1    &     0     &    0\\
         0   & 1  & -5 &  -5\\
    5     &    0  & -4 &   1\\
    5     &    0  &-10&   -4
\end{bmatrix}
$$
and the unique solutions to Lyapunov equations~\cref{eq:LyapunovLQGX} and~\cref{eq:LyapunovLQGY} are 
$$
\begin{aligned}
  X_{\mathrm{op}} &\!= \!\frac{1}{6}\begin{bmatrix}
  680  &-695   &650 & -725\\
 -695  & 985 & -725  & 925\\
  650  &-725  & 650 & -725\\
 -725  & 925  &-725  & 925
\end{bmatrix}, \\ %\qquad 
Y_{\mathrm{op}} &\!=\! \frac{1}{6}\begin{bmatrix}
  985  & -695&  -925 &  725\\
 -695 &  680  & 725 & -650\\
 -925 &  725 &  925 & -725 \\
  725 & -650 & -725 &  650
\end{bmatrix}.
\end{aligned}
$$
Then, we can compute 
$$
    \begin{aligned}
    (\bar{C}X_{\mathrm{op}}  +  V\bar{B}_{\mK}^\tr)(s I - A_{_{\mathrm{cl}}}^\tr )^{-1}Y_{\mathrm{op}} \bar{B} 
    = &\frac{-12.5 s^3 - 604.2 s^2 - 1712 s - 566.7}{    s^4 + 6 s^3 + 11 s^2 + 6 s + 1},\\ 
        (\bar{C}X_{\mathrm{op}}  +  V\bar{B}_{\mK}^\tr)(s I - A_{_{\mathrm{cl}}}^\tr )^{-1} \bar{C}_{\mK}^\tr R
        =& \frac{  12.5 s^3 + 604.2 s^2 + 1713 s + 566.7}{  s^4 + 6 s^3 + 11 s^2 + 6 s + 1}. 
    \end{aligned}
    $$
    Thus, we have 
    $
    \mathbf{G}(s) \!=\! (\bar{C}X_{\mathrm{op}}  +  V\bar{B}_{\mK}^\tr)(s I - A_{_{\mathrm{cl}}}^\tr )^{-1} (Y_{\mathrm{op}}\bar{B} +  \bar{C}_{\mK}^\tr R )) \equiv 0.
    $
This result that $\mathbf{G}(s)$ being identically zero is expected from~\Cref{theorem:non_globally_optimal_stationary_point} since $\mK^\star$ is globally optimal.

\item \Cref{example:non-minimal}: In this example, the globally optimal controller from Riccati equation is non-minimal, at which the Hessian is 
$$
   \operatorname{Hess}_{\,{\mK}^\star} = \begin{bmatrix}
      230 &   0  & -115 &   73.5&     0&   208 &  -136.5 &    0\\
    0 &    0  &  0    & 0 &    0 &   0    & 0 &    0\\
 -115 &    0  &  57.5 &   -36.75 &    0&  -104 &    68.25 &     0 \\
   73.5 &     0 &  -36.75 &    24.5&     0 &   63.75&   -44& 0\\
    0    & 0  &  0&     0 &    0&    0&     0 &    0\\
  208 &    0&  -104&    63.75&    0 &  195.5&  -122.25&     0\\
 -136.5&     0&    68.25&   -44&     0&  -122.25&    81.5&    0\\
   0    & 0 &    0 &   0&     0&     0 &   0&    0
    \end{bmatrix}.
$$
This hessian is positive semidefinite and has eigenvalues
$$
\lambda_1 =  581.5529, \lambda_2 =  7.1879, \lambda_3 =  0.2592, \lambda_4 = \cdots = \lambda_8 = 0.
$$

We have shown the following two non-minimal full-order controllers
$$
\tilde{\mK}_1
    =\left[\begin{array}{c:cc}
    0 & -2 &  0 \\[2pt]
    \hdashline
    1 & -3 & 0 \\[-2pt]
    0 & 0 & \Lambda
    \end{array}\right], \quad \tilde{\mK}_2
    =\left[\begin{array}{c:cc}
    0 & 0.5 &  0 \\[2pt]
    \hdashline
    -4 & -3 & 0 \\[-2pt]
    0 & 0 & \Lambda
    \end{array}\right],  
$$
are also globally optimal to the LQG problem in \Cref{example:non-minimal}. The Hessian at these two controllers are
$$
\begin{aligned}
       \operatorname{Hess}_{\,\tilde{\mK}_1} &= \begin{bmatrix}
   230      &   0 &-115& 73.5&        0  &       0   &      0    &     0 \\
       0     &    0    &     0 &        0    &     0  &0&  0&         0\\
 -115&         0&  57.5& -36.75&       0   &      0   &      0  &       0\\
  73.5&        0 & -36.75&   24.5&       0     &    0    &     0   &      0\\
         0 &        0   &      0       &  0      &   0  &  0&   0&         0\\
         0  & 0&         0  &       0 &   0&         0 &        0 &        0\\  
         0  & 0&         0  &       0 &   0&         0 &        0 &        0\\  
         0  & 0&         0  &       0 &   0&         0 &        0 &        0
    \end{bmatrix},
\end{aligned}
$$
with eigenvalues $ \lambda_1 = 311.0647, \lambda_2 = 0.9353, \lambda_i = 0, i =3, \ldots 8 $
and 
$$
\begin{aligned}
       \operatorname{Hess}_{\,\tilde{\mK}_2} &= \begin{bmatrix}
   14.375      &   0 &-115& -18.375&        0  &       0   &      0    &     0 \\
       0     &    0    &     0 &        0    &     0  &0&  0&         0\\
 -115&         0&  920& 147&       0   &      0   &      0  &       0\\
  -18.375&        0 & 147&   24.5&       0     &    0    &     0   &      0\\
         0 &        0   &      0       &  0      &   0  &  0&   0&         0\\
         0  & 0&         0  &       0 &   0&         0 &        0 &        0\\  
         0  & 0&         0  &       0 &   0&         0 &        0 &        0\\  
         0  & 0&         0  &       0 &   0&         0 &        0 &        0
    \end{bmatrix},
\end{aligned}
$$
with eigenvalues  $ \lambda_1 = 957.8879, \lambda_2 =  0.9871, \lambda_i = 0, i =3, \ldots 8$.   
  We further compute the matrices in~\cref{eq:augmented-matrices} as follows
 $$\bar{C} = \begin{bmatrix}
1 & -1 &0 
\end{bmatrix}, \;
\bar{B} = \begin{bmatrix}
1\\0 \\ 0
\end{bmatrix},\; \bar{C}_{\mK} = \begin{bmatrix}
0 & 0 &   0.5
\end{bmatrix},\; \bar{B}_\mK = \begin{bmatrix}
0 \\ 0\\ -4
\end{bmatrix}, \; A_{\mathrm{cl}} \!=\! \begin{bmatrix}
    0 & -1 & 0.5 \\
    1 & 0 & 0 \\
    -4 & 4 & -3
\end{bmatrix}
$$
and the unique solutions to Lyapunov equations~\cref{eq:LyapunovLQGX} and~\cref{eq:LyapunovLQGY} are 
$$
\begin{aligned}
  X_{\mathrm{op}} &\!= \!\frac{1}{4}\begin{bmatrix}
   21 & -32 & -68\\
  -32  & 81 & 128 \\
  -68 & 128 & 272
\end{bmatrix}, \qquad 
Y_{\mathrm{op}} &\!=\! \frac{1}{8}\begin{bmatrix}
   32 &   0 &   4 \\
    0  & 16   &0\\
    4 &  0  &  1
\end{bmatrix}.
\end{aligned}
$$
Then, we can compute 
 $$
    \begin{aligned}
    (\bar{C}X_{\mathrm{op}}  +  V\bar{B}_{\mK}^\tr)(s I - A_{_{\mathrm{cl}}}^\tr )^{-1}Y_{\mathrm{op}} \bar{B}&= \frac{26.5s +   56.5}{(s + 1)^2} \\
        (\bar{C}X_{\mathrm{op}}  +  V\bar{B}_{\mK}^\tr)(s I - A_{_{\mathrm{cl}}}^\tr )^{-1} \bar{C}_{\mK}^\tr R&= -\frac{26.5s +   56.5}{(s + 1)^2}. 
    \end{aligned}
    $$
    Thus, we have 
    $
    \mathbf{G}(s) \!=\! (\bar{C}X_{\mathrm{op}}  +  V\bar{B}_{\mK}^\tr)(s I - A_{_{\mathrm{cl}}}^\tr )^{-1} (Y_{\mathrm{op}}\bar{B} +  \bar{C}_{\mK}^\tr R )) \equiv 0.
    $
This result that $\mathbf{G}(s)$ being identically zero is expected from~\Cref{theorem:non_globally_optimal_stationary_point} since $\tilde{\mK}_2$ is globally optimal.  

\item \Cref{example:open-loop-stable}: The system is open-loop stable. Therefore, according to \cite[Theorem 4.2]{zheng2021analysis}, the zero controller 
$\tilde{\mK} = \left[\begin{array}{c:c}
0 & 0 \\[2pt]
    \hdashline 0 & \Lambda
\end{array}\right]
\in \mathcal{C}_{2}$ with any stable $ \Lambda \in \mathbb{R}^{2 \times 2}$ is a stationary point. We compute the matrices in~\cref{eq:augmented-matrices} as
 $$\bar{C} = \begin{bmatrix}
-\frac{1}{6} & \frac{11}{12}
\end{bmatrix}, \;
\bar{B} = \begin{bmatrix}
 -1 \\
     1
\end{bmatrix},\; \bar{C}_{\mK} = \begin{bmatrix}
0 & 0 
\end{bmatrix},\; \bar{B}_\mK = \begin{bmatrix}
0 \\ 0
\end{bmatrix}, \; A_{\mathrm{cl}} \!=\! \begin{bmatrix}
       -0.5 &       0 \\
    0.5&  -1
\end{bmatrix}
$$
and the unique solutions to~\cref{eq:LyapunovLQGX} and~\cref{eq:LyapunovLQGY} are 
$
\begin{aligned}
  X_{\mathrm{op}} &\!= \!\frac{1}{3}\begin{bmatrix}
     3  &   1 \\
     1   &  2
\end{bmatrix}, \; 
Y_{\mathrm{op}} &\!=\! \frac{1}{6}\begin{bmatrix}
    7  &  1 \\
    1  &  3
\end{bmatrix}.
\end{aligned}
$
Then, we can compute 
 $$
    \mathbf{G}(s) \!=\! (\bar{C}X_{\mathrm{op}}  +  V\bar{B}_{\mK}^\tr)(s I - A_{_{\mathrm{cl}}}^\tr )^{-1} (Y_{\mathrm{op}}\bar{B} +  \bar{C}_{\mK}^\tr R )) = \frac{5(2s-1)}{108(2s^2 + 3s +1)}.
    $$
The zero set $\mathcal{Z} = \{0.5\}$ contains a single value. For any stable $\Lambda$ with $\operatorname{eig}(-\Lambda) \nsubseteq \mathcal{Z}$, the Hessian is indefinite by \Cref{theorem:non_globally_optimal_stationary_point}. For instance,  with $\Lambda = - \mathrm{diag}(0.5,0.1)$, the Hessian is 
$$
\begin{aligned}
       \operatorname{Hess}_{\,\tilde{\mK}} &= \begin{bmatrix}
         0    &     0  &  0 &        0    &     0    &     0  &       0    &     0 \\
         0   &      0  &       0     &    0  &       0 &  -0.0561      &   0     &    0 \\
     0    &     0  &  0 &        0    &     0    &     0  &       0    &     0 \\
        0    &     0  &  0 &        0    &     0    &     0  &       0    &     0 \\
         0    &     0  &  0 &        0    &     0    &     0  &       0    &     0 \\
         0  & -0.0561 &        0&         0     &    0 &        0  &       0    &     0 \\
         0    &     0  &  0 &        0    &     0    &     0  &       0    &     0 \\
         0    &     0  &  0 &        0    &     0    &     0  &       0    &     0 
    \end{bmatrix},
\end{aligned}
$$

which is indefinite with eigenvalues $\lambda_1 = 0.0561,\lambda_2 = -0.0561, \lambda_i = 0, i = 3, \ldots, 8$.
 However, we can check that if $\Lambda = -0.5\Ib_2$, (i.e. $
     A_{\mK} = -0.5 \Ib_2,\ B_{\mK} = 0,\ C_{\mK} = 0
$), its Hessian is degenerated to zero, implying that it is a high-order saddle.
  
\end{enumerate}
\end{document}